\newtheorem{thm}{Theorem}[section]
\newtheorem{prop}[thm]{Proposition}
\newtheorem{lem}[thm]{Lemma}
\newtheorem{cor}[thm]{Corollary}
\numberwithin{equation}{section}
\theoremstyle{definition}
\newtheorem{definition}[thm]{Definition}
\newtheorem{remark}[thm]{Remark}
\newtheorem{ex}[thm]{Example}
\newcommand{\qqed}{\hspace*{\fill}$\Box$}
\newcommand{\Db}{{\rm D}^{\rm b}}
\newcommand{\Aut}{{\rm Aut}}
\newcommand{\Pic}{{\rm Pic}}
\newcommand{\rk}{{\rm rk}}
\newcommand{\Hom}{{\rm Hom}}
\newcommand{\Stab}{{\rm Stab}}
\newcommand{\id}{{\rm id}}
\newcommand{\ka}{{\mathcal A}}
\newcommand{\kc}{{\mathcal C}}
\newcommand{\kd}{{\mathcal D}}
\newcommand{\kf}{{\mathcal F}}
\newcommand{\ko}{{\mathcal O}}
\newcommand{\kp}{{\mathcal P}}
\newcommand{\kt}{{\mathcal T}}
\newcommand{\IC}{\mathbb{C}}
\newcommand{\IP}{\mathbb{P}}
\newcommand{\IR}{\mathbb{R}}
\newcommand{\IZ}{\mathbb{Z}}
\DeclareMathOperator{\Coh}{\bf{Coh}}
\renewcommand{\to}{\xymatrix@1@=15pt{\ar[r]&}}
\renewcommand{\rightarrow}{\xymatrix@1@=15pt{\ar[r]&}}
\renewcommand{\mapsto}{\xymatrix@1@=15pt{\ar@{|->}[r]&}}
\renewcommand{\twoheadrightarrow}{\xymatrix@1@=15pt{\ar@{->>}[r]&}}
\renewcommand{\hookrightarrow}{\xymatrix@1@=15pt{\ar@{^(->}[r]&}}
\newcommand{\congpf}{\xymatrix@1@=15pt{\ar[r]^-\sim&}}
\renewcommand{\cong}{\simeq}
\begin{document}

\title[Stability conditions under change of base field]{Stability conditions under change of base field}
\author[P.\ Sosna]{Pawel Sosna}

\address{Dipartimento di Matematica ``F.\ Enriques'', Universit\`a degli Studi di Milano, Via Cesare Saldini 50,
20133 Milano, Italy}
\email{pawel.sosna@guest.unimi.it}

\begin{abstract} \noindent
We investigate the behaviour of Bridgeland stability conditions under change of base field with particular focus on the case of finite Galois extensions. In particular, we prove that for a variety $X$ over a field $K$ and a finite Galois extension $L/K$ the stability manifold of $X$ embeds as a closed submanifold into the stability manifold of the base change variety.

\vspace{-2mm}\end{abstract}
\maketitle

\maketitle
\let\thefootnote\relax\footnotetext{This work was supported by the SFB/TR 45 `Periods,
Moduli Spaces and Arithmetic of Algebraic Varieties' of the DFG
(German Research Foundation)}

\section{Introduction}
Stability conditions on triangulated categories were introduced by Bridgeland in \cite{Bri1}. He proved that under mild assumptions the set of stability conditions forms a (possibly infinite-dimensional) complex manifold. If one considers so called numerical stability conditions on the bounded derived category of a smooth projective variety, then the stability manifold is always finite-dimensional. Therefore in this paper, apart from section five, only numerical stability conditions will be considered.\smallskip

The stability manifold always lives over the complex numbers, even if one considers the bounded derived category of a smooth projective variety defined over a, say, finite field. Thus, considering a field extension $L/K$ and a smooth projective variety $X$ over $K$ it is interesting to ask how the stability manifolds of $X$ and its base change variety $X_L$ are related. The description of the topology on the manifolds suggests that ${\rm{Stab}}(X)$ and ${\rm{Stab}}(X_L)$ might be different if their numerical Grothendieck groups are. On the other hand one might expect that if the numerical Grothendieck group does not change under scalar extension, then neither does the stability manifold. In order to tackle these questions we will for the most part assume that the field extension is finite and Galois.\smallskip

In the literature there are several examples of smooth projective varieties where the numerical stability manifold is at least partially known. The investigated cases include $\IP^1$ (see \cite{Okada}), curves (see \cite[Thm.\ 9.1]{Bri1} and \cite{Macri1}) and varieties with complete exceptional collections, e.g.\ projective spaces (see \cite{Macri1}). Although all results are formulated over the complex numbers, inspection of the arguments shows that the numerical stability manifold in fact does not depend on the ground field. An explanation for this phenomenon seems to be that in the above cases the structure of the derived category and/or the numerical Grothendieck group is particularly simple. Another prominent example is the case of K3 and abelian surfaces dealt with in \cite{Bri2}. Here the situation is more complicated and even the formulation of the results over other fields than $\IC$ requires certain adjustments, e.g.\ the replacement of the integral cohomology 
 groups by Chow groups (modulo numerical equivalence). Since in this example the numerical Grothendieck group can change under scalar extension, one can indeed expect the stability manifold to depend on the ground field.\smallskip

In this note we try to provide some general insight into the behaviour of stability conditions under scalar extension. To do this we first have to recall the basics of the theory of stability conditions. We then investigate the naturally defined maps between the stability manifolds ${\rm{Stab}}(X)$ and ${\rm{Stab}}(X_L)$ and prove 
\smallskip

\noindent {\bf Main result 1 (Corollary \ref{closed-submf})} {\it For any finite and separable field extension $L/K$ the manifold ${\rm{Stab}}(X)$ is a closed submanifold of ${\rm{Stab}}(X_L)$.}
\smallskip

It is a fundamental fact in the theory that a stability condition can be viewed in two different ways: Via slicings or via hearts of bounded t-structures. In all of the above the first point of view is used. In section four we consider the situation from the second point of view. Somewhat surprisingly it turns out that computations via this approach are fairly elusive.\smallskip

In the last section we study the behaviour of the stability manifold under finite Galois extension if one assumes that the numerical Grothendieck group $N(X)$ does not change. In particular, we apply the following statement to the case of K3 surfaces:
\smallskip

\noindent {\bf Main result 2 (Corollary \ref{stab-iso})} {\it Let $L/K$ be a finite Galois extension. If the map 
\[N(X)\otimes \IC \rightarrow N(X_L)\otimes \IC\]
induced by the pullback map is an isomorphism, ${\rm{Stab}}(X)$ is non-empty and ${\rm{Stab}}(X_L)$ is connected, then we have a homeomorphism ${\rm{Stab}}(X) \cong {\rm{Stab}}(X_L)$.}
\smallskip

\noindent{\bf Acknowledgements.} This paper is a part of my Ph.D. thesis supervised by Daniel \linebreak Huybrechts whom I would like to thank for a lot of fruitful discussions. I am also grateful to Emanuele Macr\`i and Paolo Stellari for their comments on a preliminary version of this paper. Furthermore, I thank the referee for valuable suggestions.

\section{Stability conditions}

We recall basic definitions and properties of Bridgeland's framework. Throughout $\kt$ will be an essentially small triangulated category (see \cite{GM}) which is linear over a field $K$, that is, the morphisms of $\kt$ have the structure of a vector space over $K$ such that the composition law is bilinear. We will furthermore assume that $\kt$ is of finite type, i.e.\ the $K$-vector space $\oplus_i \Hom_\kt(E,F[i])$ is finite-dimensional for any pair of objects $E, F$ in $\kt$.
\begin{definition}\label{def-stabcond}
A \emph{stability condition} $\sigma=(Z,\kp)$ on $\kt$ consists of a group homomorphism $Z\colon  K(\kt) \rightarrow \IC$, where $K(\kt)$ is the Grothendieck group of $\kt$, and a collection of full additive subcategories $\kp(\phi)\subset \kt$ for $\phi \in \IR$, satisfying the following conditions:\newline
\noindent
(SC1) If $0\neq E \in \kp(\phi)$, then $Z(E)=m(E)\exp(i\pi \phi)$ for some $m(E)>0$.\newline
\noindent
(SC2) $\kp(\phi)[1]=\kp(\phi+1)$ for all $\phi$.\newline
\noindent
(SC3) For $\phi_1 > \phi_2$ and $E_i \in \kp(\phi_i)$ we have $\Hom(E_1,E_2)=0$.\newline
\noindent
(SC4) For any $0\neq E \in \kt$ there exist finitely many real numbers $\phi_1 > \ldots > \phi_n$ and a collection of triangles $E_{i-1} \rightarrow E_i \rightarrow A_i$, $i \in \left\{ 1, \ldots n \right\}$, with $E_0=0$, $E_n=E$ and $A_i \in \kp(\phi_i)$.
\end{definition}
A stability condition is called \emph{numerical}, if $Z$ factors over the numerical Grothendieck group $N(\kt)$, which by definition is the quotient of $K(\kt)$ by the nullspace of the Euler form $\chi(E, F)=\sum_i (-1)^i\dim_K\Hom(E, F[i])$. 

The map $Z$ is called the \emph{central charge}, the collection $\kp$ \emph{the slicing} and an object in $\kp(\phi)$ \emph{semistable} of phase $\phi$. Given any interval $I \subset \IR$ one defines $\kp(I)$ to be the extension-closed subcategory of $\kt$ generated by $\kp(\phi)$ for $\phi \in I$. One can then prove that the category $\kp(> t)$ defines a bounded t-structure on $\kt$ for all $t \in \IR$ and that for any $t \in \IR$ the category $\kp(t, t+1]$ is the heart of this t-structure and hence abelian. As a matter of convention, one defines the heart of the slicing $\kp$ to be the abelian category $\kp(0,1]$. Note that one could also consider the bounded t-structure $\kp(\geq t)$, which gives the heart $\kp[t,t+1)$. In fact, we will use a heart of the form $\kp[0,1)$ in section three.

The collection of triangles in (iv) is the \emph{Harder--Narasimhan filtration} of $E$ and the $A_i$ are the semistable factors. The HN-filtration is unique up to a unique isomorphism. Given an object $E \in \kt$ one defines $\phi_\sigma^+(E)=\phi_1$, $\phi_\sigma^-(E)=\phi_n$ and the \emph{mass} of $E$ to be the number $m_\sigma(E)=\sum_{i=1}^n |Z(A_i)|$.

There is an equivalent way of giving a stability condition. To do this define a \emph{stability function} on an abelian category $\ka$ to be a group homomorphism $Z\colon  K(\ka) \rightarrow \IC$ such that for all $0\neq E \in \ka$ the complex number $Z(E)$ lies in the space 
$H:=\left\{ r\exp(i\pi \phi)\; | \; r>0 \; {\rm{and}} \; 0 < \phi \leq 1\right\} \subset \IC$. The \emph{phase} of an object $E \in \ka$ is then defined to be
\[\phi(E)=\frac{1}{\pi} \arg(Z(E)) \in (0,1].\]
Again, note that one could change the definition of a stability function so that the phase lies, for example, in the interval $[0,1)$.

The phase allows one to order objects of $\ka$ and it is thus possible to define semistable objects and HN-filtrations, generalising the classical case, where $\ka$ is the category of coherent sheaves on a smooth projective curve and the ordering is done with respect to the slope $\mu=\deg/\rk$.

A stability function $Z$ is said to have the \emph{Harder--Narasimhan property} if any object possesses a HN-filtration. In this case we will call $Z$ a stability condition. The following proposition provides a nice criterion for checking the HN-property and will be used later on.

\begin{prop}\label{HN-abel}\emph{\cite[Prop.\ 2.4]{Bri1}}
Suppose a stability function $Z\colon K(\ka) \rightarrow \IC$ satisfies the chain conditions \newline
(a) there are no infinite sequences of subobjects in $\ka$
\[
 \cdots \subset E_{i+1} \subset E_i \subset \cdots \subset E_2 \subset E_1 
\]
with $\phi(E_{i+1}) > \phi(E_i)$ for all $i$, \newline
(b) there are no infinite quotients in $\ka$
\[
 E_1 \twoheadrightarrow E_2 \twoheadrightarrow \cdots E_i \twoheadrightarrow E_{i+1} \twoheadrightarrow \cdots
\]
with $\phi(E_i) > \phi(E_{i+1})$ for all $i$.
Then $\ka$ has the Harder-Narasimhan property.
\end{prop}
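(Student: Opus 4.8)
The plan is to follow the classical Harder--Narasimhan argument of Rudakov, adapted to this abelian setting, and to use the two chain conditions (a) and (b) precisely once each, in complementary roles. The starting point is a \emph{seesaw} observation: for any short exact sequence $0 \longrightarrow A \longrightarrow E \longrightarrow B \longrightarrow 0$ in $\ka$ with $A, B \neq 0$, the relation $Z(E) = Z(A) + Z(B)$ together with the fact that $Z(A), Z(B)$ lie in $H$ forces $\phi(E)$ to lie weakly between $\phi(A)$ and $\phi(B)$; moreover $\phi(A) < \phi(B)$ implies $\phi(A) < \phi(E) < \phi(B)$, and $\phi(A) = \phi(B)$ implies all three phases coincide. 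Once this is available, an object with no destabilising subobject (i.e. with $\phi(A) \leq \phi(E)$ for every $0 \neq A \subseteq E$) is by definition semistable and possesses the trivial HN filtration; so the whole problem is to handle objects that are not semistable.

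The first main step is to prove, using only condition (a), that every nonzero $E$ admits a \emph{maximal destabilising subobject}: a subobject $A_1 \subseteq E$ that is semistable and satisfies $\phi(A_1) \geq \phi(B)$ for all subobjects $B$, with $A_1$ maximal among subobjects of that phase. I would first show that $\phi_{\max}(E) := \sup\{\phi(A) : 0 \neq A \subseteq E\}$ is attained: if it were not, I could extract subobjects of strictly increasing phase and, by repeatedly replacing a pair $A, B$ by the subobject of larger phase among $A$ and $A + B$ (controlling phases via the seesaw applied to $A \cap B \subseteq A, B \subseteq A + B$), manufacture an infinite descending chain $\cdots \subseteq E_{i+1} \subseteq E_i \subseteq \cdots$ with strictly increasing phases, contradicting (a). A similar argument shows that the subobjects realising $\phi_{\max}(E)$ are closed under sums and contain a unique largest one $A_1$, and the seesaw then forces $A_1$ to be semistable.

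The second main step builds the filtration by iteration and uses condition (b) to guarantee termination. Setting $E_1 = A_1$, I replace $E$ by the quotient $E/E_1$, take its maximal destabilising subobject, pull back to get $E_2 \supseteq E_1$, and continue; the seesaw together with the maximality of each $A_i = E_i/E_{i-1}$ yields the required strict inequalities $\phi(A_1) > \phi(A_2) > \cdots$. If this process did not stop, the resulting strictly ascending chain $E_1 \subseteq E_2 \subseteq \cdots$ would produce an infinite sequence of quotients of $E$ with strictly decreasing phases, contradicting (b); hence it terminates and delivers the HN filtration. I expect the main obstacle to be the first step, namely the clean extraction of the maximal destabilising subobject: turning the failure of ``$\phi_{\max}$ attained'' into an honest infinite descending chain to which (a) applies, and proving existence of a \emph{largest} maximal-phase subobject, are exactly the places where the seesaw inequalities for $A \cap B$ and $A + B$ must be combined carefully with the chain condition.
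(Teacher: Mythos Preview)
The paper does not give its own proof of this proposition: it is quoted verbatim from \cite[Prop.\ 2.4]{Bri1} and used as a black box. Your sketch is essentially Bridgeland's original argument (in its dual form, via a maximal destabilising \emph{subobject} rather than a maximally destabilising \emph{quotient}), and the delicate point you single out---turning an unbounded family of phases of subobjects into a genuine nested chain to which condition~(a) applies, via the seesaw for $A\cap B$ and $A+B$---is exactly where the work lies in Bridgeland's proof too. So your approach is correct and matches the cited source; there is simply no in-paper proof to compare against.
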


The connection between the two concepts is given by
\begin{prop}\label{eq-st-cond} \emph{\cite[Prop.\ 5.3]{Bri1}}
To give a stability condition on a triangulated category is equivalent to giving the heart $\ka$ of a bounded t-structure and a stability function with HN-property on $\ka$.
\end{prop}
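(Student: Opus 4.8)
The plan is to establish a bijection between stability conditions $\sigma=(Z,\kp)$ in the sense of Definition~\ref{def-stabcond} and pairs consisting of a heart $\ka$ of a bounded t-structure on $\kt$ together with a stability function $Z$ on $\ka$ possessing the Harder--Narasimhan property. In one direction, starting from $\sigma=(Z,\kp)$, I would take $\ka:=\kp(0,1]$, which the discussion above already identifies as the heart of a bounded t-structure. Since the inclusion of a heart induces an isomorphism $K(\ka)\cong K(\kt)$, the central charge restricts to a group homomorphism $Z\colon K(\ka)\to\IC$. To see that it is a stability function, I would take a nonzero $E\in\ka$, write out its HN-filtration from (SC4) with factors $A_i\in\kp(\phi_i)$ and $\phi_i\in(0,1]$, and invoke (SC1) to get $Z(A_i)=m(A_i)\exp(i\pi\phi_i)\in H$; since $H$ is closed under addition, $Z(E)=\sum_i Z(A_i)$ again lies in $H$. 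Finally I would check that the filtration of (SC4) is an honest filtration by subobjects in $\ka$, the connecting triangles being short exact sequences in the heart, so that $Z$ has the HN-property, with the $\sigma$-semistable objects of phase $\phi\in(0,1]$ coinciding with the $Z$-semistable objects of phase $\phi$.

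Conversely, given $(\ka,Z)$, I would reconstruct the slicing by declaring $\kp(\phi)$, for $\phi\in(0,1]$, to be the full subcategory of $Z$-semistable objects of phase $\phi$ (together with $0$), and then extending to all of $\IR$ by the forced rule $\kp(\phi+1)=\kp(\phi)[1]$, which makes (SC2) hold by construction. Axiom (SC1) is then immediate from the definition of the phase of a semistable object. The two remaining axioms are where the real work lies.

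For (SC3) I would first treat the case of two semistable objects lying in a common shift of $\ka$: a standard argument in the abelian category shows that a nonzero morphism between $Z$-semistable objects forces the phase of the source to be at most that of the target, which yields the vanishing when the source has strictly larger phase. The general case, where the two semistable objects sit in different shifted hearts $\ka[m]$ and $\ka[n]$, I would reduce to this by combining it with the t-structure vanishing $\Hom(\ka,\ka[k])=0$ for $k<0$. For (SC4) I would build the global HN-filtration in two stages: first decompose an arbitrary $0\neq E\in\kt$ via its cohomology objects $H^i(E)\in\ka$ with respect to the bounded t-structure (finitely many nonzero), which yields a filtration whose factors are the $H^i(E)[-i]$ lying in the shifted hearts; then refine each such factor by the HN-filtration of $H^i(E)$ in $\ka$ guaranteed by the HN-property of $Z$. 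Reading off the phases, the contributions of $H^i(E)[-i]$ occupy the interval $(-i,\,1-i]$, so sorting the refined factors by increasing $i$ and, within each $i$, by the decreasing phases of the HN-filtration produces a chain of semistable factors of strictly decreasing phase; amalgamating any factors of equal phase gives the filtration required by (SC4).

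I expect the main obstacle to be (SC4), specifically the verification that the interleaving of the vertical t-structure decomposition with the horizontal HN-filtrations inside each heart yields phases that are globally strictly decreasing and that the resulting filtration is finite; the boundary overlaps of the successive phase intervals $(-i,1-i]$ have to be handled with care, and the Hom-vanishing of (SC3) is exactly what is needed to guarantee that the assembled filtration is unique. The remaining verifications, namely the isomorphism on Grothendieck groups, the closure of $H$ under addition, and the matching of the two notions of semistability, are routine.
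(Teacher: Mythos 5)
Your proposal is correct and follows essentially the same route as the paper's sketch (which is Bridgeland's original argument): in one direction take $\ka=\kp(0,1]$ and check that $Z$ is a stability function whose semistable objects are exactly the nonzero objects of the $\kp(\phi)$, and conversely define $\kp(\phi)$ via $Z$-semistable objects and obtain (SC4) by interleaving the t-structure truncations with the HN-filtrations inside the heart. The extra details you supply (closure of $H$ under addition, the reduction of (SC3) to the abelian case via $\Hom(\ka,\ka[k])=0$ for $k<0$, the disjointness of the phase intervals $(-i,1-i]$) are accurate refinements of that same outline.
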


The proof is roughly as follows. Given $\sigma=(Z, \kp)$ one sets $\ka=\kp(0,1]$ and verifies that $Z$ defines a stability function on $\ka$. One then checks that the corresponding semistable objects are the nonzero objects of the categories $\kp(\phi)$ for $0<\phi \leq 1$. The decompositions of condition (SC4) in Definition \ref{def-stabcond} are Harder--Narasimhan filtrations.

In the other direction, given $(Z, \ka)$ one defines $\kp(\phi)$ for $\phi \in (0,1]$ to be the semistable objects of phase $\phi$ with respect to $Z$. Condition (SC2) then defines $\kp(\phi)$ for all $\phi \in \IR$ and (SC3) is easily checked. Filtrations as in (SC4) are obtained by combining the filtrations with respect to the t-structure with the Harder--Narasimhan filtrations given by $Z$. 

Note that if one works with a stability function on $\ka$ whose phase is e.g.\ in the interval $[0,1)$, it is of course necessary to slightly modify the argument.

In order to exclude fairly pathological examples one only considers \emph{locally finite} stability conditions. By definition, $\sigma=(Z, \kp)$ is locally finite if there exists some $\epsilon > 0$ such that for all $\phi \in \IR$ the category $\kp(\phi-\epsilon, \phi+\epsilon)$ is of finite length, i.e.\ Artinian and Noetherian. The same definition applies to the above presented point of view via hearts.
\smallskip

\noindent
{\bf{Convention:}} From here on all stability conditions will be locally finite. The set of locally finite stability conditions will be denoted by ${\rm{Stab}}(\kt)$.

Bridgeland introduces a generalised metric on ${\rm{Stab}}(\kt)$ (\cite[Prop.\ 8.1]{Bri1}):
\begin{equation}
\displaystyle d(\sigma_1,\sigma_2)=\sup_{0\neq E} \left\{ |\phi^-_{\sigma_1}(E)-\phi^-_{\sigma_2}(E)|, |\phi^+_{\sigma_1}(E)-\phi^+_{\sigma_2}(E)|, |\log\frac{m_{\sigma_1}(E)}{m_{\sigma_2}(E)}|\right\} \in [0, \infty] 
\end{equation}
and proves the

\begin{thm}\emph{\cite[Prop.\ 1.2]{Bri1}} \label{stab-mfd}
For each connected component $\Sigma \subset {\rm{Stab}}(\kt)$ there is a linear subspace $V(\Sigma) \subset \Hom_\IZ(K(\kt), \IC)$ with a linear topology  and a local homeomorphism $\mathcal{Z}\colon \Sigma \rightarrow V(\Sigma)$ sending a stability condition $(Z, \kp)$ to its central charge $Z$.
A similar result holds if one considers ${\rm{Stab}}_N(\kt)$, the set of numerical stability conditions, i.e. one substitutes $K(\kt)$ by $N(\kt)$. 
\end{thm}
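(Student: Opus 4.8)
The plan is to isolate a single deformation statement and derive everything from it. Fix a stability condition $\sigma=(Z,\kp)\in\Stab(\kt)$ and, for a group homomorphism $U\colon K(\kt)\rightarrow\IC$, set
\[
\|U\|_\sigma=\sup\left\{\frac{|U(E)|}{|Z(E)|}\ :\ 0\neq E\ \text{is $\sigma$-semistable}\right\}\in[0,\infty].
\]
Let $V_\sigma\subset\Hom_\IZ(K(\kt),\IC)$ be the subspace of those $U$ with $\|U\|_\sigma<\infty$; since every object has a HN-filtration, the semistable objects generate $K(\kt)$, so $\|\cdot\|_\sigma$ is a genuine norm on $V_\sigma$ and equips it with a linear topology. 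First I would record the elementary comparison estimates between this norm and Bridgeland's metric $d$: the definitions of $\phi_\sigma^\pm$ and $m_\sigma$ immediately bound $d(\sigma,\tau)$ from the central-charge data when the two slicings are close, and conversely a bound on $d(\sigma,\tau)$ controls $\|W-Z\|_\sigma$.

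The heart of the argument is the deformation result \cite[Thm.\ 7.1]{Bri1}: there exists $\epsilon_0>0$, depending on $\sigma$, such that every group homomorphism $W\colon K(\kt)\rightarrow\IC$ with $\|W-Z\|_\sigma<\sin(\pi\epsilon_0)$ underlies a \emph{unique} locally finite stability condition $\tau=(W,\kq)$ with $d(\sigma,\tau)<\epsilon_0$. I would construct $\kq$ by perturbation: for a $\sigma$-semistable object $E$ of phase $\phi$ the hypothesis forces $W(E)$ to have phase within $\epsilon_0$ of $\phi$, so one regroups the semistable objects into new slices according to their $W$-phases. The decisive input is local finiteness: it guarantees that on each window $\kp(\phi-\epsilon,\phi+\epsilon)$ only finitely many slices are involved, so the reassembled $\kq$ still produces finite HN-filtrations and satisfies (SC1)--(SC4). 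Uniqueness and the bound $d(\sigma,\tau)<\epsilon_0$ then follow from the comparison estimates of the previous paragraph. \textbf{This deformation theorem is the main obstacle}; the manifold structure is essentially formal once it is in place.

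Granting this, the restriction of $\mathcal{Z}$ to the open ball $\{\tau : d(\sigma,\tau)<\epsilon_0\}$ is a bijection onto the open set $\{W\in V_\sigma : \|W-Z\|_\sigma<\sin(\pi\epsilon_0)\}$, and the comparison estimates show that both $\mathcal{Z}$ and its inverse are continuous there; hence $\mathcal{Z}$ is a local homeomorphism. It remains to see that $V_\sigma$ together with its topology is independent of the chosen $\sigma$ within a fixed connected component $\Sigma$. For this I would use the deformation theorem once more: two stability conditions with $d(\sigma,\tau)$ small have uniformly comparable norms $\|\cdot\|_\sigma$ and $\|\cdot\|_\tau$, whence $V_\sigma=V_\tau$ with equivalent linear topologies. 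The assignment $\sigma\mapsto(V_\sigma,\|\cdot\|_\sigma)$ is therefore locally constant, and by connectedness of $\Sigma$ it is constant; we set $V(\Sigma):=V_\sigma$ for any $\sigma\in\Sigma$. Finally, the numerical statement is obtained by running the identical argument with $K(\kt)$ replaced by $N(\kt)$ throughout, which is legitimate because a numerical central charge factors through $N(\kt)$ and the semistable objects still span $N(\kt)$.
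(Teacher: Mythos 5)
You should know at the outset that the paper contains no proof of this statement to compare against: Theorem \ref{stab-mfd} is quoted verbatim from \cite[Prop.\ 1.2]{Bri1}, and the objects you build (the generalised norm $\|\cdot\|_\sigma$ and the description of $V(\Sigma)$ as the homomorphisms of finite norm) are precisely what the paper later recalls from \cite{Bri1} in Section 5. So the relevant comparison is with Bridgeland's own argument, and your skeleton follows it faithfully: the norm $\|\cdot\|_\sigma$, a deformation theorem producing a stability condition with prescribed nearby central charge, a uniqueness statement giving local injectivity of $\mathcal{Z}$, and local constancy of $(V_\sigma,\|\cdot\|_\sigma)$ on a connected component, with the numerical case obtained by replacing $K(\kt)$ by $N(\kt)$.

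The genuine gap is in your sketch of the deformation theorem, which you yourself flag as the heart of the matter. Constructing the new slicing $\kq$ by ``regrouping the $\sigma$-semistable objects into new slices according to their $W$-phases'' does not work: the $\tau$-semistable objects are in general \emph{not} $\sigma$-semistable, and conversely. Deforming the central charge moves walls: a $\sigma$-stable object can acquire a subobject of larger $W$-phase and become unstable, while a $\sigma$-unstable object whose HN factors have phases within $2\epsilon_0$ of each other can become $W$-semistable; a slicing whose slices are unions of old semistable objects would therefore violate (SC3) and could not supply HN filtrations (SC4) for all objects. Bridgeland's actual construction defines $\kq(\psi)$ inside the ``thin'' subcategory $\kp((\psi-\eta,\psi+\eta))$: such a category is quasi-abelian, $W$ induces a skewed stability function on it, local finiteness (which enters exactly here, not merely to bound the number of slices in a window) makes it of finite length so that an HN-property holds there, and one must then check separately that the glued family $\kq$ satisfies (SC1)--(SC4) and is again locally finite. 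A second, smaller inaccuracy: uniqueness is not part of \cite[Thm.\ 7.1]{Bri1}; it comes from the separate lemma that two stability conditions with equal central charges whose slicings are at distance $<1$ coincide -- which is also the statement you need for injectivity of $\mathcal{Z}$ on a small ball. If you treat the deformation theorem and that lemma as black boxes from \cite{Bri1} (as the paper in effect does by citing \cite[Prop.\ 1.2]{Bri1} wholesale), then the remainder of your argument -- the comparison estimates, the identification of a neighbourhood of $Z$ in the image, the norm-equivalence giving $V_\sigma=V_\tau$ locally and hence a well-defined $V(\Sigma)$, and the reduction of the numerical case -- is sound.
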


In particular, if $K(\kt)\otimes \IC$ is finite-dimensional, then ${\rm{Stab}}(\kt)$ is a finite-dimensional manifold.

There are two groups acting on the stability manifold: The group of exact autoequivalences $\Aut(\kt)$ and $\widetilde{\text{GL}}_+(2,\IR)$, the universal cover of $\text{GL}_+(2,\IR)$. The former acts from the left by isometries as follows. For $\sigma=(Z, \kp)$ and $\Phi \in \Aut(\kt)$ we set $\Phi(\sigma)=(Z\circ \Phi^{-1}, \kp')$ with $\kp'(t)=\Phi(\kp(t))$. 

For the second action first recall that $\widetilde{\text{GL}}_+(2,\IR)$ can be thought of as pairs $(T, f)$, where $f\colon  \IR \rightarrow \IR$ is an increasing map with $f(\phi+1)=f(\phi)+1$, and $T\colon  \IR^2 \rightarrow \IR^2$ is an orientation-preserving linear automorphism, such that the induced maps on $S^1=\IR / 2\IZ=\IR^2 / \IR_{> 0}$ are the same. (Roughly, this can be seen as follows. It is easy to see that the projection map from $C:=\left\{(T,f) \; | f\colon \IR\rightarrow \IR \;\text{increasing map}\ldots \right\}$ to $\text{GL}_+(2,\IR)$ has fibres isomorphic to $\IZ$. Furthermore, we know that $\text{GL}_+(2,\IR)$ is homotopy equivalent to $\text{SO}(2,\IR)$ (for example, because the latter is its maximal compact subgroup). It follows that $C$ is simply connected and, therefore, indeed isomorphic to the universal covering.)
Now, for a $\sigma \in \Stab(\kt)$ and $(T, f) \in \widetilde{\text{GL}}_+(2,\IR)$ define a new stability condition $\sigma'=(Z', \kp')$ by setting $Z'=T^{-1}\circ Z$ and $\kp'(\phi)=\kp(f(\phi))$.\smallskip

\noindent
{\bf{Notation:}} If $Y$ is a smooth projective variety and $\Db(Y)$ its bounded derived category of coherent sheaves, we will write ${\rm{Stab}}(Y)$ for the manifold of numerical locally finite stability conditions ${\rm{Stab}}_N(\Db(Y))$. The Grothendieck group $K({\rm{Coh}}(Y))= K(\Db(Y))$ will be denoted by $K(Y)$ and the numerical Grothendieck group by $N(Y)$.\smallskip

\noindent
{\bf{Convention:}} From here on, unless explicitly stated otherwise, we will only consider numerical stability conditions. 
     
\section{Base change via slicings}
Consider an arbitrary field extension $L/K$, a smooth projective variety $X$ over $K$, the base change scheme $X_L$ over $L$ and the flat projection $p\colon X_L \rightarrow X$ which yields the exact faithful functor $p^*\colon \Db(X) \rightarrow \Db(X_L)$. Given a stability condition $\sigma=(Z, \kp) \in \text{Stab}(X_L)$ one is tempted to define $p_*(\sigma):=\sigma'=(Z', \kp')$ as
\[
Z'=Z \circ p^*
\]
\[
\kp'(\phi)=\left\{ E \in \Db(X) \, | \, p^*(E) \in \kp(\phi) \right\} \; \; \forall \, \phi \in \IR.
\]
It is very easy to see that $p_*(\sigma)$ satisfies properties (SC1)-(SC3) of the definition of a stability condition. Unfortunately, the Harder--Narasimhan property need not hold: Looking at the definition of $p_*(\sigma)$ we see that an object $E \in \Db(X)$ has a HN-filtration with respect to $p_*(\sigma)$ if and only if the HN-filtration of $p^*(E)$ with respect to $\sigma$ is defined over the smaller field. For a possible counter-example cf.\ Remark \ref{not-desc-heart}.

Thus, this definition does not give a stability condition for arbitrary $\sigma \in \text{Stab}(X_L)$ and, therefore, in general this na\"{\i}ve approach does not give a map $\text{Stab}(X_L) \rightarrow \text{Stab}(X)$. 

\begin{definition}
For a field extension $L/K$ define ${\rm{Stab}}(X_L)_p$ to be the subset of stability conditions on $\Db(X_L)$ having the property that $p_*(\sigma)$ admits HN-filtrations. Thus we have a map
\[ p_*\colon  {\rm{Stab}}(X_L)_p \rightarrow \text{Stab}(X).\]
\end{definition}

\begin{lem}
The map $p_*$ is continuous and for any $\sigma, \tau \in {\rm{Stab}}(X_L)_p$ we have $d(p_*(\sigma), p_*(\tau))\leq d(\sigma, \tau)$. Its domain of definition ${\rm{Stab}}(X_L)_p$ is a closed subset of ${\rm{Stab}}(X_L)$. 
\end{lem}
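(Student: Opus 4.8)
The plan rests on a single computation relating the invariants $\phi^{\pm}$ and $m$ across $p^{*}$. For $\sigma \in \text{Stab}(X_L)_p$ and $0\neq E \in \Db(X)$ the $p_*(\sigma)$-HN filtration of $E$ exists by definition, and applying the exact functor $p^{*}$ produces a filtration of $p^{*}E$ whose factors are $p^{*}A_i \in \kp(\phi_i)$ with $\phi_1 > \cdots > \phi_n$; by uniqueness of HN filtrations this is the $\sigma$-HN filtration of $p^{*}E$. Hence $\phi^{\pm}_{p_*(\sigma)}(E) = \phi^{\pm}_\sigma(p^{*}E)$ and $m_{p_*(\sigma)}(E) = m_\sigma(p^{*}E)$. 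First I would record these three identities, since everything else follows from them.

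For the distance estimate I substitute the identities into the definition of $d(p_*(\sigma), p_*(\tau))$, turning the supremum over $0\neq E \in \Db(X)$ into a supremum of the same three quantities evaluated at the objects $p^{*}E$. As $p^{*}$ is faithful and exact, $E\neq 0$ forces $p^{*}E\neq 0$, so $\{p^{*}E \mid 0\neq E\}$ is a subset of the objects over which $d(\sigma,\tau)$ is taken; enlarging the indexing set can only increase the supremum, which gives $d(p_*(\sigma),p_*(\tau)) \le d(\sigma,\tau)$. Continuity is then immediate: $p_*$ is $1$-Lipschitz for Bridgeland's metric, and since the topology on the stability manifold is the one induced by $d$ (see \cite{Bri1}), a $1$-Lipschitz map is continuous.

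The real work is closedness, which I would prove sequentially, the space being metrizable via $d$. Let $\sigma_n \to \sigma$ with $\sigma_n \in \text{Stab}(X_L)_p$, so $d(\sigma_n,\sigma)\to 0$; I must show $p_*(\sigma)$ has the HN property. Fix $0\neq E \in \Db(X)$ and let $0 = F_0 \subset \cdots \subset F_m = p^{*}E$ be the $\sigma$-HN filtration, with semistable factors $B_j$ of phase $\psi_j$ and $\psi_1 > \cdots > \psi_m$. Choose $\eta$ smaller than a quarter of the minimal gap $\psi_j - \psi_{j+1}$. For $n$ large, $|\phi^{\pm}_{\sigma_n}(B_j) - \psi_j| \le d(\sigma_n,\sigma) < \eta$ for every $j$, so all $\sigma_n$-HN factors of each fixed $B_j$ have phases in the window $(\psi_j-\eta,\psi_j+\eta)$. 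These windows are pairwise disjoint and arranged in strictly decreasing order, so refining the $\sigma$-HN filtration of $p^{*}E$ by the $\sigma_n$-HN filtrations of the individual factors $B_j$ produces a filtration whose graded phases are globally strictly decreasing; by uniqueness this is exactly the $\sigma_n$-HN filtration of $p^{*}E$. In particular each $F_j$ occurs as a $\sigma_n$-HN filtration object of $p^{*}E$.

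Because $\sigma_n \in \text{Stab}(X_L)_p$, the $\sigma_n$-HN filtration of $p^{*}E$ is the $p^{*}$-image of the $p_*(\sigma_n)$-HN filtration $0 = E_0^{(n)} \subset \cdots \subset E_{k_n}^{(n)} = E$ of $E$ in $\Db(X)$; hence every $F_j$ equals $p^{*}E_{i(j)}^{(n)}$ for a suitable index. Coarsening that integral filtration at the indices $i(j)$ gives a chain in $\Db(X)$ whose $p^{*}$-image is precisely $0 = F_0 \subset \cdots \subset F_m = p^{*}E$, with factors $A_j'$ satisfying $p^{*}A_j' = B_j \in \kp(\psi_j)$, i.e. $A_j' \in \kp'(\psi_j)$. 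Thus this chain is a $p_*(\sigma)$-HN filtration of $E$. As $E$ was arbitrary, $p_*(\sigma)$ satisfies (SC4); it satisfies (SC1)--(SC3) in general and is locally finite because finite length of $\kp(\phi-\epsilon,\phi+\epsilon)$ pulls back along the faithful exact functor $p^{*}$. Hence $\sigma \in \text{Stab}(X_L)_p$, proving the set is closed. I expect the phase-window refinement --- showing the $\sigma$-HN filtration of $p^{*}E$ is a coarsening of the $\sigma_n$-HN filtration for $n\gg 0$ --- to be the crux, as it is exactly where Bridgeland's deformation control of slicings enters.
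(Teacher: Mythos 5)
Your proof is correct, but it takes a genuinely different route from the paper's, which is almost entirely by citation: continuity is quoted from \cite[Lem.\ 2.9]{MMS}, closedness of ${\rm Stab}(X_L)_p$ from \cite[Lem.\ 2.8]{MMS}, and the automatic local finiteness of $p_*(\sigma)$ (needed for $p_*$ to land in ${\rm Stab}(X)$ at all) from \cite[Rem.\ 2.7 (ii)]{MMS}; only the metric inequality is argued inline, and there the paper's one-line argument is exactly yours --- after the identities $\phi^{\pm}_{p_*(\sigma)}(E)=\phi^{\pm}_{\sigma}(p^*E)$ and $m_{p_*(\sigma)}(E)=m_{\sigma}(p^*E)$, the supremum defining $d(p_*(\sigma),p_*(\tau))$ runs over the smaller class of objects $\{p^*E\}$. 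What you do differently is to reprove the cited MMS statements directly for the functor $p^*$: the Lipschitz estimate gives continuity, and your phase-window argument for closedness --- bounding $\phi^{\pm}_{\sigma_n}(B_j)$ by $d(\sigma_n,\sigma)$, splicing the $\sigma_n$-HN filtrations of the factors $B_j$ (via the octahedral axiom) into a filtration of $p^*E$ with globally strictly decreasing phases, identifying it with the pullback of the $p_*(\sigma_n)$-filtration by uniqueness of HN filtrations, and then coarsening that filtration over $K$ --- is a sound substitute for \cite[Lem.\ 2.8]{MMS}, whose proof rests on the same kind of deformation control. What the paper's route buys is brevity and the full generality of the MMS machinery; what yours buys is self-containedness and the visible fact that only exactness and faithfulness of $p^*$ are used, so the lemma indeed holds for arbitrary (not necessarily finite) extensions and for non-numerical stability conditions, consistent with the remark closing Section 3. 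Two small points to tighten: the local finiteness of $p_*(\sigma)$ should be recorded for every $\sigma \in {\rm Stab}(X_L)_p$, not only inside the limit argument, since it is what makes the map $p_*$ well defined (and the one-line justification should note that faithfulness, i.e.\ reflection of zero objects, is what keeps proper strict inclusions proper after applying $p^*$); and the reduction of closedness to sequences should invoke first countability of the generalized-metric topology, since $d$ may take the value $\infty$.
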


\begin{proof}
The first assertion follows from \cite[Lem.\ 2.9]{MMS} (the local finiteness is automatic, see \cite[Rem.\ 2.7 (ii)]{MMS}). The second follows from the definition of the generalised metric by noting that the supremum on the left is taken over a smaller class of objects. The last assertion again follows from \cite[Lem.\ 2.8]{MMS}.
\end{proof} 

\begin{remark}\label{not-desc-heart}
Consider a heart $\ka$ of a bounded t-structure $\kd^{\leq 0}$ on $\Db(X_L)$ which is of finite length and such that $\kd^{\leq 0}$ does not descend to a t-structure on $\Db(X)$, i.e.
\[\kc^{\leq 0}= \left\{ E \in \Db(X) \; | \; p^*(E)\in \kd^{\leq 0}\right\}\] 
is not a t-structure on $\Db(X)$. We can define a stability condition on $\ka$ by e.g.\ sending all simple objects to $i$. Thus, $\kp(1/2)=\ka$ and $\kp(\phi)=0$ for all $1/2 \neq \phi \in (0,1]$. The HN-filtration of an object $p^*(E)$ in this example is nothing than the filtration of $p^*(E)$ with respect to the cohomology functors defined by $\ka$. Since by assumption $\ka$ does not descend, there exists an object $E_0 \in \Db(X)$ such that the HN-filtration of $p^*(E_0)$ is not defined over the smaller field. Hence in general the subset ${\rm{Stab}}(X_L)_p$ will not be equal to ${\rm{Stab}}(X_L)$.
\end{remark}

\begin{remark}\label{const}
It is easy to see that $p_*$ is $\widetilde{\text{GL}}_+(2,\IR)$-equivariant. An easy consequence is the following. For any $\sigma=(Z, \kp) \in {\rm{Stab}}(X_L)_p$ the stability condition $\widetilde{\sigma}:=(rZ, \kp)$ is in ${\rm{Stab}}(X_L)_p$ for any $r \in \IR_{>0}$.
\end{remark}

Obviously one would like to to be able to say something about the structure of ${\rm{Stab}}(X_L)_p$, e.g.\ whether this set is always non-empty or connected. To tackle these questions we will further assume that the field extension is finite. We then also have the exact functor $p_*\colon \Db(X_L) \rightarrow \Db(X)$ at our disposal. 

\begin{prop}
For a finite field extension $L/K$ the map $p\colon X_L \rightarrow X$ defines a continuous map 
\[
p^*\colon  {\rm{Stab}}(X) \rightarrow {\rm{Stab}}(X_L).
\]
Here, for a $\sigma'=(Z', \kp') \in {\rm{Stab}}(X)$ we define $p^*(\sigma'):=\sigma=(Z,\kp)$ by $Z=Z'\circ p_*$ and $\kp(\phi)=\left\{ F \in \Db(X_L) \; | \; p_*(F)\in \kp'(\phi)\right\}$ for any $\phi \in \IR$.
\end{prop}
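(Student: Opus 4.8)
The plan is to verify that $p^*(\sigma')=(Z,\kp)$ as defined is a genuine, locally finite, numerical stability condition on $\Db(X_L)$, and then to check continuity. For the first part, I would use the equivalence of Proposition \ref{eq-st-cond}: rather than checking (SC1)--(SC4) directly for the slicing $\kp$, it is cleaner to exhibit a heart and a stability function. However, the definition here is given via slicings through the pushforward $p_*$, so I would instead verify the axioms directly, exploiting that the situation is formally dual to the $p_*$ construction already studied. The key structural input is that for a \emph{finite} extension $L/K$ the functor $p_*\colon \Db(X_L)\to\Db(X)$ is exact and, crucially, satisfies an adjunction with $p^*$; the fact that $p$ is finite flat makes $p_*$ both a left and right adjoint of $p^*$ up to twist, which is what lets the construction go through in this direction when the na\"ive $p_*$-construction failed in the other.

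First I would check (SC1): for $0\neq F\in\kp(\phi)$ we have $p_*(F)\in\kp'(\phi)$, and I must rule out $p_*(F)=0$; this follows because $p_*$ is faithful on objects (as $p$ is finite and surjective, $p_*p^*$ contains the identity as a direct summand, so no nonzero object is killed). Then $Z(F)=Z'(p_*(F))=m(p_*(F))\exp(i\pi\phi)$, giving the required form. Condition (SC2) is immediate from $p_*(F[1])=p_*(F)[1]$ and the definition of $\kp$ shifting correctly. Condition (SC3) follows from adjunction: for $F_1\in\kp(\phi_1)$, $F_2\in\kp(\phi_2)$ with $\phi_1>\phi_2$, I would like to bound $\Hom_{X_L}(F_1,F_2)$ by $\Hom_X(p_*F_1,p_*F_2)=0$; this uses that $\Hom_{X_L}(F_1,F_2)$ injects into, or is controlled by, $\Hom_X(p_*F_1,p_*F_2)$ via the unit/counit, again relying on $p^*\dashv p_*$ or $p_*\dashv p^*$.

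The main obstacle, as in the dual construction, is (SC4): the existence of Harder--Narasimhan filtrations. Here, in contrast to the problematic $p_*$ direction, the construction should succeed precisely because one can build HN filtrations on $\Db(X_L)$ from those on $\Db(X)$. The strategy is: given $0\neq F\in\Db(X_L)$, descend to $p_*(F)\in\Db(X)$, take its HN filtration with factors in $\kp'(\phi_i)$, and attempt to lift this filtration back along $p^*$, or alternatively build the filtration of $F$ directly by using that $F$ is a summand of $p^*p_*(F)$ (for a finite extension, the projection formula and the trace map give $p^*p_*(F)$ as an iterated extension involving $F$ and its Galois conjugates). I expect the cleanest route is to show that $\kp(\phi-\epsilon,\phi+\epsilon)$ is of finite length by comparing with the finite-length subquotient categories downstairs, which simultaneously yields both local finiteness and, via Proposition \ref{HN-abel} applied to the heart $\kp(0,1]$, the HN property; the chain conditions upstairs follow from those downstairs because $p_*$ sends a strictly ascending chain of subobjects with increasing phases to such a chain, and $p_*$ reflects such strictness by faithfulness.

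Finally, for continuity I would invoke the metric estimate: exactly as in the earlier lemma for $p_*$, the relation $Z=Z'\circ p_*$ together with the comparison of phases $\phi^{\pm}$ and masses $m$ under $p_*$ yields $d(p^*(\sigma'_1),p^*(\sigma'_2))\leq C\cdot d(\sigma'_1,\sigma'_2)$ for a constant depending only on the extension (one may again cite \cite[Lem.\ 2.9]{MMS} with the roles of the functors reversed), giving continuity of $p^*$. The heart of the argument, and the step most likely to require care, remains the HN property in (SC4), where I must confirm that the Galois-conjugate decomposition of $p^*p_*(F)$ genuinely produces a \emph{finite} filtration of $F$ with semistable factors of the prescribed phases.
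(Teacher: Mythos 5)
Your outline correctly isolates (SC4) as the crux, but neither of the two routes you offer for it is complete, and the one you call ``cleanest'' is circular. Applying Proposition \ref{HN-abel} to ``the heart $\kp(0,1]$'' presupposes precisely what has to be proved: that the categories $\kp(\phi)$ form a slicing, equivalently that $\left\{F \in \Db(X_L) \; | \; p_*(F) \in \kp'(\geq t)\right\}$ is a bounded t-structure on $\Db(X_L)$, i.e.\ that every object upstairs admits the required decomposition. Before this is known, $\kp(0,1]$ is not known to be abelian, let alone the heart of a bounded t-structure, so Propositions \ref{HN-abel} and \ref{eq-st-cond} have nothing to be applied to; checking chain conditions via $p_*$ says nothing about the existence of truncations, which is exactly the point that fails in the opposite ($p_*$) direction. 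Your alternative route -- exhibit $F$ as a direct summand of $p^*p_*(F)$, which does carry a filtration with factors $p^*(A_i) \in \kp(\phi_i)$ because $p_*p^*(A_i)\cong A_i^{\oplus d} \in \kp'(\phi_i)$ by additivity -- is genuinely workable, but it is missing its two key ingredients: (i) the counit $p^*p_*(F) \rightarrow F$ is split only for separable extensions (for Galois one has $p^*p_*(F)\cong \oplus_{g\in G}\, g^*(F)$; for inseparable $L/K$ the counit need not split), whereas the proposition is stated for an arbitrary finite extension; and (ii) one needs the lemma that, for a collection $\kp$ satisfying only (SC1)--(SC3), the objects admitting HN filtrations form a subcategory closed under direct summands -- this is true, but requires a real argument (lift the idempotent of $E\oplus E'$ along the filtration using the $\Hom$-vanishing (SC3), then split), which you neither state nor supply; indeed you flag exactly this step as unverified. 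There is also a small slip in (SC1): the relation $p_*p^*(E)\cong E^{\oplus d}$ shows that $p^*$ kills no nonzero object, not that $p_*$ does; for the latter use instead that $p$ is affine, so that $p_*$ is exact and conservative.

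For comparison, the paper's proof is essentially one step: since $p_*(\ko_{X_L})\cong \ko_X^{\oplus d}$ and the categories $\kp'(\phi)$ are additive, one has $p_*(\ko_{X_L})\otimes \kp'(\phi) \subset \kp'[\phi,+\infty)$, which is exactly the compatibility hypothesis of Polishchuk's theorem on pulling back t-structures along morphisms of finite Tor dimension (\cite[Cor.\ 2.2.2]{Pol}, applicable because $p$ is flat); that result produces the t-structures $\kp(\geq t)$ on $\Db(X_L)$, hence the HN decompositions, in one stroke and for an arbitrary finite extension, and continuity is then \cite[Lem.\ 2.9]{MMS}, as you say. To turn your sketch into a proof you would either need to invoke such a descent-of-t-structures result, or carry out your summand argument in full (thickness lemma included) and restrict to, or reduce to, the separable case; local finiteness of $p^*(\sigma')$ would also have to be addressed rather than asserted.
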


\begin{proof}
Note that we have $p_*(\ko_{X_L})=\ko_X^d$, where $d=[L:K]$. An immediate consequence of this is that for any stability condition $\sigma'=(Z', \kp')$ on $\Db(X)$ one has 
\[
p_*(\ko_{X_L}) \otimes \kp'(\phi)= \ko_X^d \otimes \kp'(\phi) \subset \kp'(\phi) \subset \kp'[\phi, +\infty) 
\]
since the categories $\kp'(\phi)$ are additive (in fact abelian).
Since $p$ is flat, it is trivially of finite Tor dimension and therefore \cite[Cor.\ 2.2.2]{Pol} applies giving that $p^*$ exists and is well-defined (note that in \cite{Pol} the t-structure $\kp'(>t)$ is used, whereas we use $\kp'(\geq t)$, which is also what is needed in the example following Cor.\ 2.2.2 in \cite{Pol}). The continuity follows from \cite[Lem.\ 2.9]{MMS}. 
\end{proof}

\begin{remark}
Similarly to $p_*$ the map $p^*$ satisfies $d(p^*(\sigma'), p^*(\tau'))\leq d(\sigma', \tau')$
for all $\sigma', \tau' \in {\rm{Stab}}(X)$.
\end{remark}

\begin{remark}
We can describe geometric analogues of the maps $p_*$ and $p^*$: Let $\pi\colon Y \rightarrow Z$ be a finite unramified covering of smooth projective varieties and consider $\mu$-semistability of coherent sheaves on $Z$ resp.\ on $Y$ with respect to $\ko_Z(1)$ resp.\ $\ko_Y(1):=\pi^*(\ko_Z(1))$. Then it is well-known that a coherent sheaf $F$ on $Z$ is $\mu$-semistable if and only if $\pi^*(F)$ is $\mu$-semistable, cf.\ \cite[Lem.\ 3.2.2]{HL}.
On the other hand a coherent sheaf $F'$ on $Y$ is $\mu$-semistable if and only $\pi_*(F')$ is $\mu$-semistable, cf.\ \cite[Prop.\ 1.5]{Takemoto}.

Note that the classical notion of semistability behaves well under field extensions, see \cite[Thm.\ 1.3.7 and Cor.\ 1.3.8]{HL}. For example, the HN-filtration of $p^*(E)$, where $E$ is a (pure) sheaf on $X$, is the pullback of the HN-filtration of $E$, a fact that will be used later on.
\end{remark}

Recall that the group of automorphisms of $X_L$ acts on ${\rm{Stab}}(X_L)$. In particular, we have an action of $G:=\Aut(L/K)$ on the stability manifold (note, however, that elements of $G$ are only $K$-linear).

We can describe the image of $p^*$ by formulating the

\begin{prop}\label{image-beta}
Let $L/K$ be a finite extension and let $\sigma'$ be an element of $\Stab(X)$. Then $p^*(\sigma')$ is invariant under the action of the group $G=\Aut(L/K)$.
\end{prop}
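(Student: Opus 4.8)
The plan is to show that for each $g \in G = \Aut(L/K)$ the pushed-forward stability condition satisfies $g(p^*(\sigma')) = p^*(\sigma')$, where $g$ acts via the induced autoequivalence $g_* = g^*$ on $\Db(X_L)$. Recalling the definition of the action, $g(\sigma) = (Z \circ g^{-1}, g\cdot\kp)$ with $(g\cdot\kp)(\phi) = g(\kp(\phi))$, it suffices to verify two things: that the central charge of $p^*(\sigma')$ is unchanged by precomposition with $g^{-1}$, and that each slice $\kp(\phi) = \{F \in \Db(X_L) \mid p_*(F) \in \kp'(\phi)\}$ is preserved by the autoequivalence $g$.

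The central charge claim should reduce to a single commutativity statement on the level of the pushforward functor, namely $p_* \circ g \iso p_*$ as functors $\Db(X_L) \to \Db(X)$. First I would establish this isomorphism. Geometrically, $g$ is a $K$-automorphism of $X_L$ sitting over $\id_X$, i.e.\ $p \circ g = p$ as maps $X_L \to X$; passing to derived pushforwards along the equality $p = p \circ g$ yields $p_* \iso p_* \circ g_* = p_* \circ g$ (using that $g_* = g^*$ since $g$ is an isomorphism). Consequently $Z = Z' \circ p_*$ satisfies $Z \circ g^{-1} = Z' \circ p_* \circ g^{-1} \iso Z' \circ p_* = Z$, so the central charge is $G$-invariant; and for the slices, $p_*(g(F)) \iso p_*(F)$ shows $p_*(F) \in \kp'(\phi)$ if and only if $p_*(g(F)) \in \kp'(\phi)$, which is exactly the statement that $g$ maps $\kp(\phi)$ into itself. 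Applying the same argument to $g^{-1}$ gives equality $g(\kp(\phi)) = \kp(\phi)$, and hence $g(p^*(\sigma')) = p^*(\sigma')$.

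The one point requiring genuine care is the functorial isomorphism $p_* \iso p_* \circ g$, since everything downstream is a formal consequence of it. The subtle issue is that $g$ is only a $K$-morphism, not an $L$-morphism, so one must be careful that $g_*$ is well-defined as an autoequivalence of $\Db(X_L)$ (it is, because $g\colon X_L \to X_L$ is an isomorphism of schemes, and pushforward/pullback along an isomorphism are quasi-inverse exact equivalences, independent of any linearity) and that the relation $p \circ g = p$ genuinely holds. The latter is the defining property of the Galois action: $X_L = X \times_{\Spec K} \Spec L$ and $g$ acts only on the second factor through $\Aut(L/K)$, so it commutes with the projection $p$ to the first factor. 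Once this geometric identity is in place, the verification of (SC1)--(SC4) for $g(p^*(\sigma'))$ is immediate because $g$ is an exact equivalence and both the central charge and all slices have been shown to agree with those of $p^*(\sigma')$.
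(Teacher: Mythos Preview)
Your proposal is correct and follows essentially the same route as the paper: both arguments reduce everything to the single geometric identity $p\circ g = p$ for $g\in\Aut(L/K)$, from which $p_*\circ g_* \cong p_*$ (equivalently $p_*\circ g^*\cong p_*$) is immediate, and then both the central-charge invariance $Z\circ g_* = Z'\circ p_*\circ g_* = Z'\circ p_* = Z$ and the slice invariance $g^*(\kp(\phi))=\kp(\phi)$ follow formally. One small notational slip: you write ``$g_* = g^*$ since $g$ is an isomorphism,'' but for an automorphism one has $g_* = (g^{-1})^*$, not $g_* = g^*$; this does not affect your argument, since $p\circ g = p$ and $p\circ g^{-1}=p$ both hold, so $p_*$ absorbs either $g_*$ or $g^*$ equally well.
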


\begin{proof}
First, note that $(g^{-1})^*=g_*$.  If $g \in G$ and $\sigma=p^*(\sigma')$, then $g(\sigma)=(Z \circ g_*, g^*(\kp))$. It follows that for $E \in \kp(\phi)$ one has $p_*(E)=p_*(g^*(E))$. Furthermore, for any $E \in \Db(X_L)$ the following holds: 
\[Z(g_*(E))=Z' p_*(g_*(E))=Z'(p_*(E))=Z(E).\]
We conclude that $g(\sigma)=\sigma$ as claimed.
\end{proof}

\begin{remark}
Clearly the statement of the proposition is only interesting in the case when $G$ is non-trivial, e.g.\ for a finite Galois extension.
\end{remark}

\begin{remark}\label{comp-galois}
In the case of a finite Galois extension of degree $d$  one has $p_* p^*(E)=E^{\oplus d}$ for any $E \in \Db(X)$ and $p^* p_*=\sum_{g \in G} g^*$, cf.\ e.g.\ \cite{Roberts}. 
\end{remark}

\begin{lem}\label{alphabeta}
If the field extension $L/K$ is finite and Galois of degree $d$, then the composition $p_* \circ p^*\colon  {\rm{Stab}}(X) \rightarrow {\rm{Stab}}(X)$ is equal to the action of $h:=(\frac{1}{d}\cdot \id, \id) \in \widetilde{\rm{GL}}_+(2,\IR)$.In particular, $p^*({\rm{Stab}}(X))$ is contained in ${\rm{Stab}}(X_L)_p$ (see also Proposition \ref{dom-def} below).
\end{lem}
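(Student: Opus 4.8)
The plan is to compute the composition $p_* \circ p^*$ explicitly on central charges and slicings, and then recognise the result as the $\widetilde{\mathrm{GL}}_+(2,\IR)$-action of $h=(\frac{1}{d}\cdot\id,\id)$. First I would treat the central charge. For $\sigma'=(Z',\kp')\in\Stab(X)$, write $p^*(\sigma')=(Z,\kp)$ with $Z=Z'\circ p_*$, and then $p_*(p^*(\sigma'))=(Z\circ p^*,\,?)$. Using Remark \ref{comp-galois}, $p_* p^*(E)=E^{\oplus d}$, so for any $E\in\Db(X)$ we have
\[
(Z\circ p^*)(E)=Z'(p_* p^*(E))=Z'(E^{\oplus d})=d\cdot Z'(E).
\]
Thus the central charge of $p_*\circ p^*(\sigma')$ is $d\,Z'$. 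On the other hand, the action of $h=(T,f)$ with $T=\frac{1}{d}\cdot\id$ sends $Z'$ to $T^{-1}\circ Z'=d\,Z'$, which matches. Since $f=\id$ here, the phase relabelling is trivial, so I only need to check the two slicings agree.

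Next I would identify the slicings. The slicing of $p_*\circ p^*(\sigma')$ at phase $\phi$ consists of those $E\in\Db(X)$ with $p^*(E)$ semistable of phase $\phi$ for $p^*(\sigma')$, i.e. $p_*p^*(E)=E^{\oplus d}\in\kp'(\phi)$. Because $\kp'(\phi)$ is additive and extension-closed (in fact abelian), $E^{\oplus d}\in\kp'(\phi)$ holds if and only if $E\in\kp'(\phi)$: one direction is clear, and for the converse $E$ is a direct summand of $E^{\oplus d}$, and summands of semistable objects of a fixed phase in the finite-length category $\kp'(\phi)$ remain in $\kp'(\phi)$. Hence the slicing of $p_*\circ p^*(\sigma')$ is exactly $\kp'$, which agrees with the slicing of $h\cdot\sigma'$ since $f=\id$ leaves the slicing unchanged. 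This establishes $p_*\circ p^* = h$ on all of $\Stab(X)$.

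For the final assertion, I would argue that $h$ is invertible as an element of $\widetilde{\mathrm{GL}}_+(2,\IR)$ and hence acts by a homeomorphism of $\Stab(X)$; in particular $p_*\circ p^*$ is a bijection of $\Stab(X)$ onto itself. The map $p^*\colon\Stab(X)\to\Stab(X_L)$ takes values in $\Stab(X_L)_p$ precisely when $p_*(p^*(\sigma'))$ admits HN-filtrations, which it does since it equals $h\cdot\sigma'\in\Stab(X)$; this is exactly the condition defining $\Stab(X_L)_p$. Therefore $p^*(\Stab(X))\subset\Stab(X_L)_p$.

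The main obstacle I anticipate is the slicing identification, specifically the equivalence $E^{\oplus d}\in\kp'(\phi)\iff E\in\kp'(\phi)$. One must be careful that direct summands of a semistable object of phase $\phi$ are again semistable of the same phase; this uses that $\kp'(\phi)$ is a genuine additive (abelian) category closed under summands, which follows from local finiteness, rather than merely an extension-closed subcategory. Once this summand-stability is secured, the rest is the routine bookkeeping of matching the pair $(d\,Z',\kp')$ against the definition of the $h$-action, where the triviality $f=\id$ makes the phase comparison immediate.
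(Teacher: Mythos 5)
Your proposal is correct, and its skeleton is the same as the paper's: compute the central charge of $p_*\circ p^*(\sigma')$ as $dZ'$ using $p_*p^*(E)=E^{\oplus d}$, identify the slicing of $p_*\circ p^*(\sigma')$ with $\kp'$, match the pair $(dZ',\kp')$ against the $h$-action, and deduce the ``in particular'' clause from the fact that $(dZ',\kp')=h\cdot\sigma'$ is a genuine stability condition, which is exactly the condition defining ${\rm{Stab}}(X_L)_p$. The one place you diverge is the key inclusion $\{E \,:\, E^{\oplus d}\in\kp'(\phi)\}\subset\kp'(\phi)$: the paper takes the HN filtration of $E$ with respect to $\sigma'$, forms its $d$-fold direct sum to obtain a filtration of $E^{\oplus d}$ by semistables of strictly decreasing phases, and invokes uniqueness of HN filtrations to conclude that this filtration is trivial; you instead quote closure of the slicing subcategories $\kp'(\phi)$ under direct summands. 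That fact is true, and the paper itself uses it without proof (in the proof of Lemma \ref{betaalpha}), so your argument is valid; the paper's version is merely more self-contained, deriving what it needs directly from the axioms. One correction, however: closure under direct summands does not ``follow from local finiteness''. It holds for \emph{every} slicing, and the standard proof is precisely the paper's device (sum HN filtrations and use their uniqueness), or alternatively the abelianness of the categories $\kp'(\phi)$, which Bridgeland establishes for arbitrary stability conditions; local finiteness (finite length of the categories $\kp'(\phi-\epsilon,\phi+\epsilon)$) plays no role anywhere in this lemma, so the parenthetical appeal to it should be dropped.
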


\begin{proof}
Consider a stability condition $\sigma'=(Z', \kp')$ in ${\rm{Stab}}(X)$.
Then $p_* p^*(\sigma')=\sigma''=(Z'', \kp'')$ is defined as
\[Z''= Z'\circ p_* \circ p^*=d Z'\]
\[ \kp''(\phi)=\left\{ F \in \Db(X_L) \, | \, p_*p^*(F)=F^{\oplus d} \in \kp'(\phi) \right\}\]
Clearly $\kp'(\phi) \subset \kp''(\phi)$. Let us prove the other inclusion. Assume $F \in \kp''(\phi)$. Since $\sigma'$ is a stability condition, $F$ has a HN-filtration given by certain triangles $F_{i-1} \rightarrow F_i \rightarrow A_i$, $i \in \left\lbrace  1, \ldots, n\right\rbrace $. Since the direct sum of triangles is a triangle, we can take the $d$-fold direct sum of these and get a filtration of $F^{\oplus d}$. But HN-filtrations are unique and by assumption $F^{\oplus d} \in \kp'(\phi)$, so its HN-filtration is trivial. 
Therefore $n=1$, $\phi_1=\phi$ and $F \in \kp'(\phi)$. Thus, $p_* \circ p^*(Z', \kp')=(dZ', \kp')$ as claimed.
\end{proof}

\begin{cor}\label{beta-inj}
For a finite Galois extension, the map $p_* \circ p^*$ is a homeomorphism, thus $p^*$ is injective and $p_*$ surjective.\qqed
\end{cor}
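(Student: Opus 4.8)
The plan is to deduce everything directly from Lemma \ref{alphabeta}, so that essentially no new computation is required; the substantive content already sits in that lemma. The first step is to recall that $\widetilde{\text{GL}}_+(2,\IR)$ acts on $\Stab(X)$, and that, being a group action, every fixed element of the group acts as a homeomorphism: the action of $h^{-1}$ furnishes a continuous two-sided inverse to the action of $h$. Since Lemma \ref{alphabeta} identifies the composite $p_* \circ p^*$ with the action of the single element $h=(\tfrac{1}{d}\cdot\id,\id)$, it follows at once that $p_* \circ p^*$ is a homeomorphism of $\Stab(X)$ onto itself. Concretely $h$ acts by $(Z',\kp')\mapsto(dZ',\kp')$, matching the formula produced in the lemma, and its inverse is the action of $h^{-1}=(d\cdot\id,\id)$, namely the rescaling $(Z',\kp')\mapsto(\tfrac{1}{d}Z',\kp')$.

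With $p_* \circ p^*$ known to be a bijection, the two remaining assertions are purely formal. I would invoke the elementary facts that a composite $g\circ f$ which is injective forces $f$ to be injective, and a composite which is surjective forces $g$ to be surjective. Taking $f=p^*\colon\Stab(X)\rightarrow\Stab(X_L)$ and $g=p_*\colon\Stab(X_L)_p\rightarrow\Stab(X)$ — the composition being legitimate because Lemma \ref{alphabeta} also records that $p^*(\Stab(X))\subseteq\Stab(X_L)_p$ — one concludes immediately that $p^*$ is injective and $p_*$ is surjective.

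The only point that genuinely deserves checking, and the closest thing here to an obstacle, is that $h$ really is an element of $\widetilde{\text{GL}}_+(2,\IR)$ in the pair description $(T,f)$ recalled earlier: the map $T=\tfrac{1}{d}\cdot\id$ is an orientation-preserving automorphism of $\IR^2$ (since $d>0$), the map $f=\id$ satisfies $f(\phi+1)=f(\phi)+1$, and both induce the identity on $S^1$, so they are compatible. Once this is verified the corollary follows with no further work, all the real content having been carried out in Lemma \ref{alphabeta}.
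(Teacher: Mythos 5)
Your proposal is correct and is precisely the argument the paper intends: the corollary carries no written proof (just \qqed) because it is immediate from Lemma~\ref{alphabeta}, exactly as you argue --- $p_*\circ p^*$ is the action of $h=(\frac{1}{d}\cdot\id,\id)$, hence a homeomorphism (its inverse being the action of $h^{-1}$, i.e.\ rescaling the central charge, which visibly preserves HN-filtrations and the generalised metric), and injectivity of $p^*$ and surjectivity of $p_*$ follow formally from the composite being bijective.
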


We will now investigate the domain of definition for the morphism $p_*$. To do this we need the following

\begin{lem}\label{linearised}
If $G$ is a finite group acting on a variety $Y$ over a field $K$ of characteristic prime to the order of the group, then any linearised object in $\Db(Y)$ is isomorphic as a complex to a complex of $G$-linearised sheaves (cf.\ \cite{BriMa} and \cite{Ploog}).  
\end{lem}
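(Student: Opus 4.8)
The plan is to identify the given linearised object with the image, under the forgetful functor, of an honest object of the bounded derived category of $G$-linearised sheaves, from which a representing complex can be read off directly. Write $\coh^G(Y)$ for the abelian category of $G$-linearised coherent sheaves on $Y$, let $\Forg\colon \coh^G(Y)\to\coh(Y)$ be the functor forgetting the linearisation, and let $\Inf\colon \coh(Y)\to\coh^G(Y)$ send $F$ to $\bigoplus_{g\in G}g^*F$ equipped with the permutation linearisation. Both functors are exact, $\Forg$ trivially and $\Inf$ because it is a finite sum of the exact functors $g^*$, so they induce functors $\Forg\colon \Db(\coh^G(Y))\to\Db(Y)$ and $\Inf\colon \Db(Y)\to\Db(\coh^G(Y))$ on bounded derived categories. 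As every object of $\Db(\coh^G(Y))$ is isomorphic to a bounded complex with terms in $\coh^G(Y)$, i.e.\ to a complex of $G$-linearised sheaves, it suffices to lift a given linearised object $(E,\lambda)$ of $\Db(Y)$ — where $\lambda_g\colon g^*E\isomor E$ and $\lambda_{gh}=\lambda_g\circ g^*(\lambda_h)$ — to an object of $\Db(\coh^G(Y))$ along $\Forg$.

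First I would use the linearisation to put an extra $G$-action on the object $\Inf(E)=\bigoplus_{h\in G}h^*E$ of $\Db(\coh^G(Y))$. For each $g\in G$ define an endomorphism $\tau_g$ of $\Inf(E)$ sending the summand $h^*E$ into the summand $(gh)^*E$ by the composite $h^*E\xrightarrow{\lambda_h}E\xrightarrow{\lambda_{gh}^{-1}}(gh)^*E$. A direct computation with the cocycle identity shows that each $\tau_g$ commutes with the permutation linearisation of $\Inf(E)$, hence is an automorphism in $\Db(\coh^G(Y))$, and that $g\mapsto\tau_g$ is a group homomorphism. Consequently
\[ \tilde e:=\frac{1}{|G|}\sum_{g\in G}\tau_g\in\End_{\Db(\coh^G(Y))}(\Inf(E)) \]
is a well-defined idempotent, since $\tau_{g'}\circ\tilde e=\tilde e$ for every $g'$ forces $\tilde e^2=\tilde e$. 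This is the sole point at which the hypothesis matters: the factor $\tfrac{1}{|G|}$ requires $|G|$ to be invertible in $K$, equivalently the characteristic to be prime to $|G|$; this is the usual Maschke averaging, and from the homological side it reflects the vanishing of $H^{\geq 1}(G,-)$.

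Finally I would split $\tilde e$. Being the bounded derived category of an abelian category, $\Db(\coh^G(Y))$ is idempotent complete (Balmer--Schlichting), so $\tilde e$ has an image $E':=\im(\tilde e)\in\Db(\coh^G(Y))$. Applying $\Forg$ and conjugating the permutation-type maps $\Forg(\tau_g)$ by the isomorphism $\bigoplus_h\lambda_h\colon \bigoplus_h h^*E\isomor E^{\oplus|G|}$ identifies $\Forg(\tilde e)$ with the averaging projector of the regular representation on $E^{\oplus|G|}$, whose image is a single copy of $E$; hence $\Forg(E')\iso E$, and by construction this isomorphism intertwines the linearisation carried by $E'$ with $\lambda$. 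Representing $E'$ by a bounded complex of objects of $\coh^G(Y)$ then exhibits $E$, compatibly with its linearisation, as a complex of $G$-linearised sheaves. I expect the main obstacle to be exactly the verification underlying the second step: that the maps $\tau_g$ built from $\lambda$ are genuinely equivariant and multiplicative, so that the averaged idempotent lives in $\End_{\Db(\coh^G(Y))}(\Inf(E))$ rather than merely in $\End_{\Db(Y)}(\Forg\Inf(E))$; granting this, exactness of $\Inf$, faithfulness of $\Forg$ and idempotent completeness make the rest formal. This follows the averaging arguments of \cite{BriMa} and \cite{Ploog}.
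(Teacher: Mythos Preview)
Your argument is correct and follows the averaging strategy of \cite{Ploog}, but it is a genuinely different route from the one the paper takes. The paper argues by induction on the number of nonzero cohomology sheaves of $E$: using the standard truncation triangle $\tau^{\leq 0}E \to E \to H^1(E)[-1]$, the linearisation on $E$ induces linearisations on the outer terms (since truncation commutes with each $g^*$), and by induction these lie in the image of the comparison functor $\Phi\colon \Db(\coh^G(Y))\to\kt$. Full faithfulness of $\Phi$ (quoted from \cite{Ploog}) then lifts the connecting morphism to $\Db(\coh^G(Y))$, and a cone there represents $E$ in $\Db(Y)$. Your approach instead builds the lift directly as a summand of $\Inf(E)$, splitting an averaged idempotent via Karoubi completeness of $\Db(\coh^G(Y))$.

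What each buys: the paper's induction avoids any appeal to idempotent completeness but leans on the cited full faithfulness, and the author explicitly notes that the argument does \emph{not} show the resulting isomorphism respects the linearisations (so essential surjectivity of $\Phi$ is not established there). Your averaging argument is more self-contained, uses the characteristic hypothesis in a transparent way (invertibility of $|G|$), and in fact yields the stronger conclusion that $(E,\lambda)$ is isomorphic \emph{in $\kt$} to $\Phi(E')$. The verification you flag---that the $\tau_g$ are honest morphisms in $\Db(\coh^G(Y))$ and that $g\mapsto\tau_g$ is multiplicative---is indeed the one nontrivial check, but it is a routine consequence of the cocycle identity once conventions are fixed.
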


\begin{proof}
First recall that a linearised object is a pair $(E,\lambda)$, where $E \in \Db(Y)$ and $\lambda$ is a collection of isomorphisms $\lambda_g\colon  E \rightarrow g^*(E)$ satisfying the usual cocycle condition, and morphisms between two such pairs are morphisms in $\Db(Y)$ which are compatible with the linearisations (note that the same definition applies, for example, in an abelian category). Denote the category of linearised objects by $\kt$ and write $D^G(Y)=\Db({\rm{Coh^G}}(Y))$ for the bounded derived category of the abelian category ${\rm{Coh^G}}(Y)$ of linearised coherent sheaves on $Y$. Clearly, there is a functor $\Phi\colon  D^G(Y) \rightarrow \kt$, which was proved to be an equivalence in \cite{Ploog}. We will use that it is fully faithful, but to make the paper more self-contained we will show the statement of the lemma (which is weaker than essential surjectivity) by induction on the number of cohomology objects. The case $n=1$ is obvious. Let $(E, \lambda)$ be a complex with $n$ cohomology objects. We may assume $H^i(E)=0$ for $i \geq 2$. Consider the triangle given by the standard t-structure on $\Db(Y)$:
\[ \tau^{\leq 0}(E)=:E' \rightarrow E \rightarrow \tau^{\geq 1}(E)=H^1(E)[-1]\rightarrow \tau^{\leq0}(E)[1]. \]
For any $\lambda_g$ we get, since truncation is a functor and commutes with $g^*$ for any $g \in G$, corresponding morphisms on $E'$ and $H^1(E)$ and these morphisms define linearisations of these complexes. By induction $E' \cong \Phi(F)$ and $H^1(E)[-1]\cong \Phi(F')$ in $\kt$ for some complexes $F, F' \in D^G(Y)$. Since the morphisms in the triangle and the isomorphisms are compatible with the linearisations, the map $F'[-1] \rightarrow F$ is a map in $D^G(Y)$ and hence a cone is in $D^G(Y)$. This cone is isomorphic to $E$ in $\Db(Y)$. This concludes the proof.     
\end{proof}
\smallskip
\noindent
{\bf{Convention:}} From here on we assume that the order of the Galois group does not divide the characteristic of the ground field.

\begin{remark}
Note that our proof does not show $\Phi$ to be essentially surjective, since the isomorphism does not need to respect the linearisations of $E$ resp.\ the cone.

One could try to generalise the above statement to arbitrary faithfully flat morphisms replacing linearisations of sheaves resp.\ complexes by descent data. Note that the fully faithfulness used above is satisfied in this situation.
\end{remark}

We also need the following 

\begin{lem}\label{betaalpha}
Let $L/K$ be finite and Galois and consider $\sigma=(Z, \kp) \in {\rm{Stab}}(X_L)_p$. Then $p^* \circ p_*(\sigma)=(\widetilde{Z}, \widetilde{\kp})$, where
\[
\widetilde{Z}(E)=\sum_{g \in G} Z(g^*(E)) \; \; \; \; \;and
\]  
\[
\widetilde{\kp}(\phi)=\left\{E \; | \; \oplus_{g \in G} g^*(E) \in \kp(\phi) \right\}=\left\{E \; | \;  \left\{g^*(E)\right\}_{g \in G} \subset \kp(\phi) \right\}.
\]
\end{lem}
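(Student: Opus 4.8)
The plan is to compute $p^*\circ p_*(\sigma)$ by unwinding the two definitions one after the other and then simplifying with the formula $p^*p_*=\sum_{g\in G}g^*$ from Remark \ref{comp-galois}. Since $\sigma\in\Stab(X_L)_p$, the pushforward $p_*(\sigma)=:\sigma'=(Z',\kp')$ is a genuine stability condition on $\Db(X)$, with $Z'=Z\circ p^*$ and $\kp'(\phi)=\{E\in\Db(X)\mid p^*(E)\in\kp(\phi)\}$, so that the map $p^*$ of the earlier Proposition may be applied to it and $p^*\circ p_*(\sigma)$ is indeed a well-defined stability condition. Writing $p^*(\sigma')=(Z'',\kp'')$, the defining formulas give $Z''=Z'\circ p_*$ and $\kp''(\phi)=\{F\in\Db(X_L)\mid p_*(F)\in\kp'(\phi)\}$, and the task is to identify these with $\widetilde Z$ and $\widetilde\kp$.

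For the central charge I would chase classes in $K(X_L)$: for any $F$ one has $Z''(F)=Z'(p_*(F))=Z(p^*p_*(F))$, and by Remark \ref{comp-galois} the class $[p^*p_*(F)]$ equals $\sum_{g\in G}[g^*(F)]$. As $Z$ is a group homomorphism this yields $Z''(F)=\sum_{g\in G}Z(g^*(F))=\widetilde Z(F)$, as asserted.

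For the slicing the same substitution works at the level of objects rather than classes: $F\in\kp''(\phi)$ means $p_*(F)\in\kp'(\phi)$, i.e. $p^*(p_*(F))\in\kp(\phi)$; using the isomorphism $p^*p_*(F)\cong\bigoplus_{g\in G}g^*(F)$ together with the fact that $\kp(\phi)$ is closed under isomorphism, this is exactly the condition $\bigoplus_{g\in G}g^*(F)\in\kp(\phi)$. Hence $\kp''(\phi)$ coincides with the first description of $\widetilde\kp(\phi)$.

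It then remains to prove the second equality, namely that $\bigoplus_{g}g^*(F)\in\kp(\phi)$ holds if and only if each $g^*(F)\in\kp(\phi)$, and this is the only step that is not purely formal. The inclusion in which the individual conditions imply the direct-sum condition is immediate since $\kp(\phi)$ is additive; the reverse inclusion is where I expect the only real content, as it requires $\kp(\phi)$ to be closed under direct summands. The cleanest way to see this is through the extremal phases: for nonzero $A,B$ one has $\phi^+_\sigma(A\oplus B)=\max(\phi^+_\sigma(A),\phi^+_\sigma(B))$ and $\phi^-_\sigma(A\oplus B)=\min(\phi^-_\sigma(A),\phi^-_\sigma(B))$, which follows from the uniqueness of HN filtrations (compare the direct-sum argument already used in the proof of Lemma \ref{alphabeta}). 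Consequently, if $A\oplus B\in\kp(\phi)$, i.e. $\phi^+_\sigma(A\oplus B)=\phi^-_\sigma(A\oplus B)=\phi$, then necessarily $\phi^+_\sigma(A)=\phi^-_\sigma(A)=\phi$ and likewise for $B$, so both $A$ and $B$ lie in $\kp(\phi)$. Applying this to the summands $g^*(F)$ of $\bigoplus_{g}g^*(F)$ gives the desired equality. The one point to be careful about is precisely this summand-closure of $\kp(\phi)$; everything else is a direct consequence of the definitions of $p_*$ and $p^*$ and of Remark \ref{comp-galois}.
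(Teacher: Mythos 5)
Your proposal is correct and follows essentially the same route as the paper: both deduce the formula for $\widetilde{Z}$ and the first description of $\widetilde{\kp}(\phi)$ by unwinding the definitions of $p_*$ and $p^*$ and invoking Remark \ref{comp-galois}, and both settle the second equality by additivity of $\kp(\phi)$ in one direction and closure under direct summands in the other. The only difference is one of detail: the paper simply cites summand-closure of the slices as known, whereas you prove it via uniqueness of HN filtrations and the resulting formulas for $\phi^{\pm}_\sigma$ of a direct sum, which is a valid argument.
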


\begin{proof}
Using Remark \ref{comp-galois} we immediately get the formula for $\widetilde{Z}$ and the first equality for $\widetilde{\kp}(\phi)$. As to the second equality: ``$\subset$'' holds because the categories $\kp(\phi)$ are closed under direct summands and ``$\supset$'' holds because they are additive.
\end{proof}

We can now prove the 

\begin{prop}\label{dom-def}
For a finite Galois extension $L/K$ with Galois group $G$ a stability condition $\sigma=(Z,\kp)$ is in ${\rm{Stab}}(X_L)_p$ if and only if its slicing $\kp$ is invariant under the action of the Galois group, i.e.\ $E \in \kp(\phi)$ implies $g^*(E) \in \kp(\phi)$ for all $g \in G$. In particular, the subset of $G$-invariant stability conditions is contained in ${\rm{Stab}}(X_L)_p$.
\end{prop}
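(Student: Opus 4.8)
The plan is to prove the two implications separately, using the explicit description of $p^* \circ p_*$ from Lemma~\ref{betaalpha}. Recall that $\sigma \in {\rm{Stab}}(X_L)_p$ means precisely that $p_*(\sigma)$ has HN-filtrations, so the statement asserts that this HN-property is equivalent to Galois-invariance of the slicing $\kp$.

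For the direction ``$G$-invariant slicing implies $\sigma \in {\rm{Stab}}(X_L)_p$'', suppose $E \in \kp(\phi)$ forces $g^*(E) \in \kp(\phi)$ for all $g \in G$. I would show that $p_*(\sigma)$ has the HN-property by exhibiting the required filtrations directly. Given $0 \neq E' \in \Db(X)$, its image $p^*(E') \in \Db(X_L)$ has a HN-filtration with respect to $\sigma$, say with semistable factors $A_i \in \kp(\phi_i)$. The key point is that $p^*(E')$ carries a canonical $G$-linearisation (coming from $p \circ g = p$, so $g^* p^* \cong p^*$), and by the uniqueness of HN-filtrations each $g^*$ permutes the filtration; since $g^*$ preserves phases under the invariance hypothesis, each $g^*$ must fix the filtration, so the linearisation descends to the filtration. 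By Lemma~\ref{linearised} the linearised pieces are isomorphic to complexes of linearised sheaves, hence — via the equivalence $\Db(X_L)^G \simeq \Db(X)$ for a Galois extension — the filtration descends to a filtration of $E'$ over $K$. This gives exactly the triangles required for the HN-filtration of $E'$ with respect to $p_*(\sigma)$.

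For the converse, ``$\sigma \in {\rm{Stab}}(X_L)_p$ implies $\kp$ is $G$-invariant'', I would argue by contradiction using Lemma~\ref{betaalpha}. If $\sigma \in {\rm{Stab}}(X_L)_p$, then $p_*(\sigma) \in {\rm{Stab}}(X)$, and applying the already-constructed map $p^*$ we obtain $p^* p_*(\sigma) = (\widetilde{Z}, \widetilde{\kp}) \in {\rm{Stab}}(X_L)$, a genuine stability condition. Now suppose $\kp$ were not $G$-invariant, i.e.\ there is some $E \in \kp(\phi)$ with $g^*(E) \notin \kp(\phi)$ for some $g$; then $\bigoplus_{g} g^*(E) \notin \kp(\phi)$, so $E \notin \widetilde{\kp}(\phi)$ while $E \in \kp(\phi)$. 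I would derive a contradiction by comparing the slicings $\kp$ and $\widetilde{\kp}$: both are slicings of honest stability conditions on $\Db(X_L)$, and the inclusion $\widetilde{\kp}(\phi) \subset \kp(\phi)$ together with the central-charge relation should force equality via the rigidity of slicings (an object semistable for one with a strictly larger/mismatched phase for the other breaks axiom (SC3) or the HN-uniqueness). Concretely, $E \in \kp(\phi)$ gives a nontrivial $\widetilde{\sigma}$-HN-filtration of $E$ whose factors lie in strictly different $\widetilde{\kp}$-phases, yet these factors are built from the $g^*$-orbit, producing incompatible phase data.

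The main obstacle I anticipate is the converse direction — specifically, pinning down precisely why $\widetilde{\kp}(\phi) \subsetneq \kp(\phi)$ is impossible for two slicings that are both parts of locally finite stability conditions. The forward direction rests cleanly on descent (Lemma~\ref{linearised}) plus uniqueness of HN-filtrations, which are already available. For the converse one must genuinely exploit that $\widetilde{\sigma}$ is a \emph{stability condition} and not merely a collection of subcategories: the argument needs the full strength of axioms (SC3)--(SC4) applied to $\widetilde{\sigma}$, showing that any $E$ witnessing non-invariance would force a semistable object of $\widetilde{\sigma}$ to map nontrivially into one of strictly lower phase, contradicting (SC3). Making this comparison rigorous, rather than hand-waving about ``phase data,'' is where the real work lies.
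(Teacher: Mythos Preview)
Your approach matches the paper's exactly: both directions use the same ingredients (Lemma~\ref{linearised} plus Galois descent for one implication, Lemma~\ref{betaalpha} and the inclusion $\widetilde{\kp}(\phi)\subset\kp(\phi)$ for the other). Your sketch of ``$G$-invariant slicing $\Rightarrow$ $\sigma\in{\rm Stab}(X_L)_p$'' is correct and is precisely what the paper does.

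For the converse you have the right setup but you have not closed the argument, and your suggested route (``factors built from the $g^*$-orbit, incompatible phase data'') is a detour. The clean observation---which you are circling without landing on---is this: take $E\in\kp(\phi)$ and form its HN-filtration with respect to $\widetilde{\sigma}=p^*p_*(\sigma)$, with factors $A_i\in\widetilde{\kp}(\phi_i)$. Since $\widetilde{\kp}(\phi_i)\subset\kp(\phi_i)$, this very same filtration is a decomposition of $E$ into $\sigma$-semistable pieces of decreasing phase, i.e.\ it is a $\sigma$-HN-filtration of $E$. But $E$ is $\sigma$-semistable of phase $\phi$, so its $\sigma$-HN-filtration is trivial; hence $n=1$, $\phi_1=\phi$, and $E\in\widetilde{\kp}(\phi)$. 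Thus $\kp=\widetilde{\kp}$, which is $G$-invariant by Lemma~\ref{betaalpha}. No contradiction argument or appeal to (SC3) is needed, and the factors of the $\widetilde{\sigma}$-filtration have no a~priori relation to the orbit $\{g^*(E)\}$---that intuition should be dropped.
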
 

\begin{proof}
If $p_*(\sigma)$ is a stability condition on $\Db(X)$, then we can consider $\widetilde{\sigma}=p^* \circ p_*(\sigma) \in \text{Stab}(X_L)$. Lemma \ref{betaalpha} shows that the slicing $\widetilde{\kp}$ of $\widetilde{\sigma}$ is invariant under the action of the Galois group and that $\widetilde{\kp}(\phi) \subset \kp(\phi)$ for all $\phi \in \IR$. In fact, we have an equality. To see this, consider $\phi \in \IR$ and $E \in \kp(\phi)$. We know that $E$ has a HN-filtration with respect to $\widetilde{\sigma}$, i.e.\ it can be filtered by objects in $\widetilde{\kp} \subset \kp$. Since $E$ is semistable with respect to $\sigma$, this filtration has to be trivial. Thus, $E \in \widetilde{\kp}(\phi)$.

For the converse implication assume the slicing $\kp$ of $\sigma$ to be $G$-invariant. Consider the HN-filtration of an object $p^*(E)$, $E \in \Db(X)$. Applying an arbitrary element $g \in G$ yields the HN-filtration of $g^*p^*(E)=p^*(E)$ with respect to $\sigma$, because the slicing is $G$-invariant. It follows that all the objects of the filtration are linearised objects of $\Db(X_L)$, because HN-filtrations are unique up to unique isomorphism. By Lemma \ref{linearised} any such object is isomorphic to a complex of $G$-equivariant sheaves on $X_L$. Using Galois descent we see that a complex of equivariant objects is defined over $K$, and hence the HN-filtration is defined over $K$. 
\end{proof}

\begin{remark}
The stability function of a stability condition $\sigma \in {\rm{Stab}}(X_L)_p$ need not be $G$-invariant, since we only assume that the phase is constant on the orbits of semistable objects under the action of $G$. For an unstable object $E$ the numbers $Z(E)$ and $Z(g^*(E))$ ($g \in G$) will in general not even have the same phase.
\end{remark}

Using Lemma \ref{betaalpha} we see that the restriction of $p^* \circ p_*$ to the subset of $G$-invariant stability conditions
\[
{\rm{Stab}}(X_L)^G=\left\{ \sigma \in {{\rm{Stab}}}(X_L) \, | \, g \sigma= \sigma \; \forall g \in G\right\}
\] 
is equal to $h=(\frac{1}{d}\cdot \id, \id)$. In particular, the map $p^*\colon  {\rm{Stab}}(X) \rightarrow {\rm{Stab}}(X_L)^G$ is surjective. Hence, $p^*$ is a homeomorphism, since we already have seen that it is injective. Thus, we have the following diagram
\[
\begin{xy}
\xymatrix{ {\rm{Stab}}(X_L)_p \ar[rd]^{p_*} & \ar@{_{`}->}[l]{\rm{Stab}}(X_L)^G \\
					&  {\rm{Stab}}(X). \ar[u]^{p^*}_\cong}
\end{xy}
\]
Using this and the fact that the subset ${\rm{Stab}}(X_L)^G$ is a closed submanifold of ${\rm{Stab}}(X_L)$ (cf.\ \cite[Lem. 2.15]{MMS} where this result is formulated for the action of a finite group on a smooth projective complex variety but the proof does not use these assumptions) we immediately derive

\begin{cor}\label{closed-submf}
For any finite and separable field extension $L/K$ the map $p^*$ realizes ${\rm{Stab}}(X)$ as a closed submanifold of ${\rm{Stab}}(X_L)$.
\end{cor}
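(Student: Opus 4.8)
The plan is to assemble the corollary from the ingredients already established, so the proof is essentially a matter of bookkeeping. The key observation is that the separability of a finite extension $L/K$ lets us pass to the Galois closure, where the whole machinery of the preceding propositions applies. First I would let $M/K$ denote the Galois closure of $L/K$, which exists and is finite precisely because $L/K$ is finite and separable, and write $H=\Aut(M/L)\subset G=\Aut(M/K)$ for the corresponding subgroups. The idea is to realize the inclusion $\Stab(X)\hookrightarrow\Stab(X_L)$ as a restriction of the inclusion $\Stab(X)\hookrightarrow\Stab(X_M)$, and to identify the image inside $\Stab(X_M)$ with an appropriate locus of invariant stability conditions.

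Concretely, I would factor the projection $X_M\to X$ as $X_M\xrightarrow{q}X_L\xrightarrow{p}X$, giving $(pq)^*=q^*\circ p^*$ on stability manifolds. By Lemma \ref{alphabeta} applied to the Galois extension $M/K$, together with Corollary \ref{beta-inj}, the map $(pq)^*\colon\Stab(X)\to\Stab(X_M)$ is a homeomorphism onto the closed submanifold $\Stab(X_M)^G$ of $G$-invariant stability conditions, the closedness being the cited result \cite[Lem.\ 2.15]{MMS}. It therefore suffices to show that $p^*\colon\Stab(X)\to\Stab(X_L)$ is a homeomorphism onto a closed submanifold, and since $p^*$ has already been shown (for the Galois extension $M/L$, via the analogue of Corollary \ref{beta-inj}) to be injective with $d(p^*\sigma,p^*\tau)\le d(\sigma,\tau)$, the content is to identify its image and verify the submanifold structure.

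The main step is to show that $q^*$ restricts to a homeomorphism from the image of $p^*$ in $\Stab(X_L)$ onto $\Stab(X_M)^G$, whence the image of $p^*$ inherits the closed-submanifold structure through the homeomorphism $(q^*)^{-1}$ restricted to $\Stab(X_M)^G$. Here I would use Proposition \ref{dom-def} and the diagram preceding the corollary: $q^*$ maps $\Stab(X_L)$ into $\Stab(X_M)^H$ and is a homeomorphism onto the $H$-invariant locus, and a stability condition in the image of $p^*$ pulls back under $q^*$ to one that is invariant under all of $G$ rather than merely $H$, since $p^*(\sigma')$ is already $\Aut(M/K)$-related across the whole tower. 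The closedness of $\Stab(X_M)^G$ inside $\Stab(X_M)^H$, and hence inside $\Stab(X_L)$ after transporting via the homeomorphism $q^*$, then yields that $p^*(\Stab(X))$ is closed in $\Stab(X_L)$.

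The hard part will be the clean verification that the images match up across the tower, that is, that pulling back through $q^*$ sends exactly the $G$-invariant locus back to $p^*(\Stab(X))$ and that the submanifold charts are compatible; the equivariance and metric-contraction properties make injectivity and continuity routine, but one must check that no extra invariant conditions appear at the intermediate level $X_L$ that fail to descend to $X$. This is precisely controlled by the Galois-descent argument in the proof of Proposition \ref{dom-def}, applied now to the subgroup $H$ together with the observation that a $G$-invariant condition on $X_M$ is in particular $H$-invariant and therefore descends first to $X_L$ and then, by full $G$-invariance, all the way to $X$. Assembling these identifications gives that $p^*$ realizes $\Stab(X)$ as a closed submanifold of $\Stab(X_L)$, as claimed.
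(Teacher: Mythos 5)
Your proof is correct and takes essentially the same approach as the paper: the paper's own argument likewise passes to the normal closure $L^n$ (your $M$), uses the factorization $\widehat{p}^*=\overline{p}^*\circ p^*$ through the tower $K\subset L\subset L^n$ together with the Galois cases for $L^n/K$ and $L^n/L$, and in effect identifies $p^*(\Stab(X))$ with the preimage under $\overline{p}^*$ of the closed $\Aut(L^n/K)$-invariant locus in $\Stab(X_{L^n})$. One minor slip worth noting: the injectivity of $p^*$ is not given by Corollary \ref{beta-inj} applied to the Galois extension $M/L$ (that yields injectivity of $q^*$); it follows instead from the injectivity of the composite $q^*\circ p^*=(pq)^*$, i.e.\ from the Galois case $M/K$, which your factorization already supplies.
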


\begin{proof}
The statement is clear for a finite Galois extension from the above discussion. The general case follows by considering the tower $K \subset L \subset L^n$, where $L^n$ denotes the normal closure, the induced commutative diagram
\[
\begin{xy}
\xymatrix{ {\rm{Stab}}(X) \ar[rd]_{p^*} \ar[rr]^{\widehat{p}^*} && {\rm{Stab}}(X_{L^n}) \\
 & {\rm{Stab}}(X_L) \ar[ru]_{\overline{p}^*}}
\end{xy}
\]
(where $p\colon  X_L \rightarrow X$, $\overline{p}\colon  X_{L^n} \rightarrow X_L$ and $\widehat{p}\colon  X_{L^n} \rightarrow X$ are the projections) and the fact that $L \rightarrow L^n$ is Galois.
\end{proof}

\begin{cor}
Let $L/K$ be a finite and separable extension and $L^n$ the normal closure of $L$. Consider the diagram

\[
\begin{xy}
\xymatrix{ \Stab(X_{L^n})_{\widehat{p}} \ar@/^1.5pc/[rr]^{\widehat{p}_*} \ar@{^{(}->}[r] & {\rm{Stab}}(X_{L^n}) & \Stab(X) \\
						& \Stab(X_L) \ar[u]^{\overline{p}^*} & \Stab(X_L)_p \ar@{_{(}->}[l]\ar[u]^{p_*} }
\end{xy}
\]

(where the notation is as in the proof of Corollary \ref{closed-submf}).
Then for $\sigma \in {\rm{Stab}}(X_L)$ the following holds
\[
\sigma \in {\rm{Stab}}(X_L)_{p} \Longleftrightarrow \overline{p}^* (\sigma) \in {\rm{Stab}}(X_{L^n})_{\widehat{p}}.
\]

\end{cor}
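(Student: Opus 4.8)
The plan is to reduce the claim to the Galois tower $K \subset L \subset L^n$ by combining functoriality of the pushforward maps with the already established behaviour of the composition $\overline{p}_* \circ \overline{p}^*$. The key structural observation is that, because $L^n$ is the normal closure of the separable extension $L/K$, both $L^n/K$ and $L^n/L$ are Galois; I write $e = [L^n:L]$ for the degree of the latter, so that $L^n/L$ falls within the scope of Lemma~\ref{alphabeta}.

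First I would record the functorial identity $\widehat{p}_* = p_* \circ \overline{p}_*$ on the level of the maps between stability manifolds. This follows from $\widehat{p} = p \circ \overline{p}$, whence $\widehat{p}^* = \overline{p}^* \circ p^*$ as functors $\Db(X) \to \Db(X_{L^n})$: for any datum $\tau = (W, \kq)$ on $\Db(X_{L^n})$, both $\widehat{p}_*(\tau)$ and $p_*(\overline{p}_*(\tau))$ have central charge $W \circ \widehat{p}^*$ and $\phi$-slice $\{E \in \Db(X) \mid \widehat{p}^*(E) \in \kq(\phi)\}$. At this stage these are merely data satisfying (SC1)--(SC3); the Harder--Narasimhan property is exactly what the subscripts $p$ and $\widehat{p}$ encode.

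Next I would apply this identity to $\tau = \overline{p}^*(\sigma)$. Since $L^n/L$ is Galois, Lemma~\ref{alphabeta} (applied to the extension $L^n/L$ and the variety $X_L$) guarantees both that $\overline{p}^*(\sigma) \in \Stab(X_{L^n})_{\overline{p}}$ and that $\overline{p}_*(\overline{p}^*(\sigma)) = h \cdot \sigma$ with $h = (\tfrac{1}{e}\cdot\id, \id)$. Writing $\sigma = (Z, \kp)$, this stability condition is $(eZ, \kp)$: its slicing coincides with that of $\sigma$, only the central charge being rescaled by the positive scalar $e$. Feeding this into the identity of the previous step yields $\widehat{p}_*(\overline{p}^*(\sigma)) = p_*(eZ, \kp)$, which is a genuine application of $p_*$ because $(eZ,\kp)$ is an honest stability condition on $\Db(X_L)$.

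Finally I would conclude by comparing $p_*(eZ, \kp)$ with $p_*(\sigma) = p_*(Z, \kp)$. These two data have the same slicing, so one admits Harder--Narasimhan filtrations if and only if the other does; equivalently, by Remark~\ref{const} (applied with $r=e$ and with $r=1/e$), $(eZ, \kp) \in \Stab(X_L)_p$ iff $(Z,\kp) \in \Stab(X_L)_p$. Chaining the equivalences gives $\overline{p}^*(\sigma) \in \Stab(X_{L^n})_{\widehat{p}}$ iff $p_*(eZ,\kp)$ admits HN-filtrations iff $(eZ,\kp) \in \Stab(X_L)_p$ iff $\sigma \in \Stab(X_L)_p$, as claimed. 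The only point demanding genuine care is the bookkeeping of domains of definition: one must ensure that the functorial identity $\widehat{p}_* = p_*\circ\overline{p}_*$ and the use of Lemma~\ref{alphabeta} are legitimate \emph{before} any HN property is known, which is precisely why it matters that $\overline{p}^*(\sigma)$ automatically lands in $\Stab(X_{L^n})_{\overline{p}}$, making $\overline{p}_*(\overline{p}^*(\sigma))$ a bona fide stability condition on $X_L$ to which $p_*$ can be applied.
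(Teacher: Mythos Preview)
Your proof is correct and follows essentially the same route as the paper: both arguments hinge on the factorisation $\widehat{p}_* = p_* \circ \overline{p}_*$, Lemma~\ref{alphabeta} applied to the Galois extension $L^n/L$ to identify $\overline{p}_*\overline{p}^*(\sigma)$ with $(eZ,\kp)$, and Remark~\ref{const} to pass between $(Z,\kp)$ and $(eZ,\kp)$ in $\Stab(X_L)_p$. The only difference is organisational---you package both implications into a single chain of equivalences, whereas the paper proves one direction and leaves the converse as ``similarly''.
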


\begin{proof}
If $\sigma \in {\rm{Stab}}(X_L)_{p}$, then applying Lemma \ref{alphabeta} to the pair $\overline{p}_*$ and $\overline{p}^*$ and using Remark \ref{const} we see that $\overline{p}_*\overline{p}^*(\sigma) \in {\rm{Stab}}(X_L)_{p}$, therefore $p_*\overline{p}_*\overline{p}^*(\sigma)=\widehat{p}_*\overline{p}^*(\sigma)$ exists and hence $\overline{p}^*(\sigma)\in {\rm{Stab}}(X_{L^n})_{\widehat{p}}$. The converse is proved similarly.
\end{proof}

Until the end of this section we will work in the case of a finite Galois extension of degree $d$. We know from Proposition \ref{dom-def} that the $G$-invariant stability conditions ${\rm{Stab}}(X_L)^G$ are contained in the set ${\rm{Stab}}(X_L)_p$. The next two results establish geometric connections between the two sets.

\begin{lem}
The subset ${\rm{Stab}}(X_L)^G$ is a retract of ${\rm{Stab}}(X_L)_p$. 
\end{lem}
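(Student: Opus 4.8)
The plan is to build an explicit continuous retraction out of the two maps $p_*$ and $p^*$, corrected by a rescaling. The key observation, already recorded just above the statement, is that $p^*\circ p_*$ restricts on ${\rm{Stab}}(X_L)^G$ to the action of $h:=(\frac{1}{d}\cdot\id,\id)\in\widetilde{\rm{GL}}_+(2,\IR)$. Since $h$ acts as a (global) homeomorphism of ${\rm{Stab}}(X_L)$, this suggests setting
\[
r:=h^{-1}\circ p^*\circ p_*\colon {\rm{Stab}}(X_L)_p \longrightarrow {\rm{Stab}}(X_L),
\]
and checking that $r$ is a retraction of ${\rm{Stab}}(X_L)_p$ onto ${\rm{Stab}}(X_L)^G$ (the inclusion of the latter into the former being guaranteed by Proposition \ref{dom-def}).

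First I would verify that $r$ actually takes values in ${\rm{Stab}}(X_L)^G$. For $\sigma\in{\rm{Stab}}(X_L)_p$ the composite $p_*(\sigma)$ is by definition a genuine stability condition on $\Db(X)$, so $p^*(p_*(\sigma))$ is defined, and by Proposition \ref{image-beta} it is $G$-invariant. It then remains to observe that $h^{-1}$ preserves ${\rm{Stab}}(X_L)^G$: indeed $h^{-1}$ only rescales the central charge and leaves the slicing unchanged (its $f$ is the identity), while $G$-invariance is a condition on the slicing alone; equivalently, the $\widetilde{\rm{GL}}_+(2,\IR)$-action commutes with the $G$-action, so it sends $G$-invariant stability conditions to $G$-invariant ones. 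Continuity of $r$ is then immediate, as $p_*$ and $p^*$ were shown to be continuous earlier in this section and $h^{-1}$ is a homeomorphism.

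Finally, the retraction property follows directly from the displayed observation: restricted to ${\rm{Stab}}(X_L)^G$ one has $p^*\circ p_*=h$, whence $r=h^{-1}\circ h=\id$. The only point requiring a little care is the claim that $p^*\circ p_*(\sigma)$ is $G$-invariant for \emph{every} $\sigma\in{\rm{Stab}}(X_L)_p$, not merely for those $\sigma$ that are already $G$-invariant; but this is exactly what Proposition \ref{image-beta} gives (with the explicit description of $p^*\circ p_*$ supplied by Lemma \ref{betaalpha}), so no genuine obstacle arises. I expect the main subtlety to be bookkeeping the normalisation by $h^{-1}$ correctly, since without it the composite $p^*\circ p_*$ scales the central charge by $d$ and is therefore not the identity on ${\rm{Stab}}(X_L)^G$.
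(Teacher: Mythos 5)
Your argument is correct, but it reaches the conclusion by a genuinely different route than the paper. The paper constructs the retraction by hand: it defines $f(\sigma)=(Z',\kp)$ with $Z'(E)=\frac{1}{d}\sum_{g\in G}Z(g^*(E))$, checks directly that this is again a stability condition (using Proposition \ref{dom-def} to see that the slicing is $G$-invariant, so the HN-filtrations are untouched), and then proves continuity by an explicit mass estimate, namely $m_{f(\sigma)}(E)=\frac{1}{d}\sum_{g\in G}m_\sigma(g^*(E))$. Your map $r=h^{-1}\circ p^*\circ p_*$ is in fact the \emph{same} map: by Lemma \ref{betaalpha} together with the equality $\widetilde{\kp}=\kp$ established in the proof of Proposition \ref{dom-def}, one has $p^*p_*(\sigma)=(\sum_{g}Z\circ g^*,\kp)$ for $\sigma\in{\rm{Stab}}(X_L)_p$, so $r(\sigma)=(\frac{1}{d}\sum_g Z\circ g^*,\kp)=f(\sigma)$. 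But you obtain all required properties from results already on record rather than recomputing them: well-definedness from the definition of ${\rm{Stab}}(X_L)_p$, landing in ${\rm{Stab}}(X_L)^G$ from Proposition \ref{image-beta}, continuity from the continuity of $p_*$ and $p^*$ proved earlier in the section, and the retraction identity from the displayed fact that $p^*\circ p_*$ restricts to $h$ on ${\rm{Stab}}(X_L)^G$. This is more economical; the only ingredient you use that the paper never states explicitly is that $h$ (hence $h^{-1}$) acts by a homeomorphism, but that is immediate: $h^{\pm 1}$ leaves every slicing unchanged and rescales all masses by the common factor $d^{\pm 1}$, so it is an isometry of the generalised metric.

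One inaccuracy deserves flagging. You justify $h^{-1}({\rm{Stab}}(X_L)^G)\subset{\rm{Stab}}(X_L)^G$ in part by saying that ``$G$-invariance is a condition on the slicing alone''. That is false for ${\rm{Stab}}(X_L)^G$ as defined in the paper: $g\sigma=\sigma$ requires $Z\circ g_*=Z$ as well, and the remark following Proposition \ref{dom-def} stresses precisely that elements of ${\rm{Stab}}(X_L)_p$ need \emph{not} have $G$-invariant central charge. Indeed, if invariance were a slicing condition only, then ${\rm{Stab}}(X_L)^G$ would equal ${\rm{Stab}}(X_L)_p$ and the lemma would be vacuous. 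Fortunately your parenthetical alternative is the correct justification: the $\widetilde{\rm{GL}}_+(2,\IR)$-action commutes with the action of autoequivalences (concretely, rescaling $Z$ by a positive constant preserves the condition $Z\circ g_*=Z$), so $h^{-1}$ does preserve ${\rm{Stab}}(X_L)^G$ and your proof stands.
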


\begin{proof}
Recall that if $i\colon  S \subset T$ is a pair of topological spaces, then by definition $S$ is called a retract of $T$ if there exists a map $f\colon  T \rightarrow S$ such that $f\circ i=\id_S$. 

Define a map $f\colon  {\rm{Stab}}(X_L)_p \rightarrow {{\rm{Stab}}(X_L)^G}$ as follows.
For $\sigma=(Z,\kp) \in {\rm{Stab}}(X_L)_p$ we set $f(\sigma):=\sigma'=(Z', \kp')$, where
$Z'(E)=(1/d) \sum_{g \in G} Z(g^*(E))$ and $\kp'=\kp$. It is fairly easy to show that $\sigma'$ is a stability condition: Since by Proposition \ref{dom-def} the slicing of $\sigma$ is $G$-invariant, the HN-filtrations of $\sigma$ and $\sigma'$ coincide. Furthermore, any object in $\kp(\phi)$ clearly still has phase $\phi$, since $Z(g^*(E))$ is of phase $\phi$, for all $g \in G$. 

Next, one has to verify that $f$ is continuous: Let $\sigma$ and $\tau$ be two stability conditions. The distance between $f(\sigma)$ and $f(\tau)$ is given by formula (2.1) and since the HN-filtrations do not change, neither do the first two numbers in the expression. As for the last one note that for any semistable object $A$ and any $g \in G$ one has $Z(g^*(A))=\delta_g Z(A)$ for some $\delta_g > 0$ and hence $|Z(A)+Z(g^*(A))|=|Z(A)|+|Z(g^*(A))|$. Therefore 
\[
m_{f(\sigma)}(E)=\frac{1}{d}\big(\sum_{g \in G} m_\sigma(g^*(E))\big)
\]
for an arbitrary object $E$ and hence the supremum over all $E$ of $|\log\frac{m_{f(\sigma)}(E)}{m_{f(\tau)}(E)}|$ is small if the same holds for $|\log\frac{m_\sigma(E)}{m_\tau(E)}|$.

Denoting the embedding of ${{\rm{Stab}}(X_L)^G}$ into ${\rm{Stab}}(X_L)_p$ by $i$ we immediately conclude that the composition $f \circ i\colon {{\rm{Stab}}(X_L)^G}\, \hookrightarrow {\rm{Stab}}(X_L)_p \rightarrow {{\rm{Stab}}(X_L)^G}$ is equal to the identity map and we therefore have a retraction. 
\end{proof}

In fact, a stronger statement is true:
\begin{prop}\label{defo-retract}
The subset ${\rm{Stab}}(X_L)^G$ is a deformation retract of ${\rm{Stab}}(X_L)_p$.
\end{prop}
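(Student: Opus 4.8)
The plan is to upgrade the retraction $f$ from the previous lemma into a deformation retraction by exhibiting an explicit homotopy $H\colon {\rm{Stab}}(X_L)_p \times [0,1] \rightarrow {\rm{Stab}}(X_L)_p$ with $H_0=\id$, $H_1=i\circ f$, and $H_t|_{{\rm{Stab}}(X_L)^G}=\id$ for all $t$. The natural candidate is the straight-line interpolation on central charges: for $\sigma=(Z,\kp)$ set $Z_t(E)=(1-t)\,Z(E) + t\cdot\frac{1}{d}\sum_{g\in G} Z(g^*(E))$ while keeping the slicing $\kp$ fixed, and define $H(\sigma,t)=(Z_t,\kp)$. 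At $t=0$ this is $\sigma$, at $t=1$ it is $f(\sigma)$, and on a $G$-invariant stability condition one has $Z(g^*(E))=Z(E)$ for all semistable $E$, so $Z_t$ agrees with $Z$ on the relevant objects and the homotopy is stationary there.

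The key steps are as follows. First I would check that each $H(\sigma,t)$ is genuinely a stability condition with the same slicing $\kp$. By Proposition \ref{dom-def} the slicing is $G$-invariant, so for a $\sigma$-semistable object $A$ of phase $\phi$ every $g^*(A)$ is again semistable of the same phase; writing $Z(g^*(A))=\delta_g Z(A)$ with $\delta_g>0$ as in the preceding proof, the convex combination $Z_t(A)=\big((1-t)+\frac{t}{d}\sum_g \delta_g\big)Z(A)$ is a positive real multiple of $Z(A)$. Hence $Z_t$ assigns to each $\kp(\phi)$ exactly the phase $\phi$, so the pair $(Z_t,\kp)$ satisfies (SC1)--(SC4) with the unchanged Harder--Narasimhan filtrations; local finiteness is inherited since $\kp$ is fixed. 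Second, I would verify that $H(\sigma,t)$ stays inside the domain of definition ${\rm{Stab}}(X_L)_p$: since the slicing $\kp$ is unchanged and still $G$-invariant, Proposition \ref{dom-def} applies verbatim to conclude $H(\sigma,t)\in{\rm{Stab}}(X_L)_p$ for every $t$.

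The remaining step is continuity of $H$ jointly in $(\sigma,t)$. Because the slicing is constant along the homotopy, the phase terms $\phi^{\pm}$ in the generalised metric (2.1) do not move, so only the mass terms need control. As in the retraction proof, $G$-invariance of the slicing gives $m_{H(\sigma,t)}(E)=\big((1-t)+\frac{t}{d}\sum_{g\in G}\delta_g\big)$-weighted sums of $m_\sigma(g^*(E))$ over the semistable factors, and these depend continuously on $t$ and vary by a controlled amount when $\sigma$ varies; I would bound $|\log(m_{H(\sigma,t)}(E)/m_{H(\tau,s)}(E)|)$ using that $Z_t$ is a continuous function of $(Z,t)$ together with the estimate $d(p_*\sigma,p_*\tau)\le d(\sigma,\tau)$ already available.

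I expect the main obstacle to be precisely this joint continuity estimate: one must show the supremum defining the metric behaves well uniformly as both $t$ and $\sigma$ vary, i.e.\ that the coefficient $(1-t)+\frac{t}{d}\sum_g\delta_g$ (which a priori depends on the object through the $\delta_g$) does not destroy uniform control of the mass ratios. The cleanest way around this is to invoke the local-homeomorphism description of Theorem \ref{stab-mfd}: since the slicing is fixed, $H(\sigma,\cdot)$ traces a straight path in the central-charge space $V(\Sigma)\subset\Hom_\IZ(N(X_L),\IC)$, and the averaging operator $Z\mapsto\frac{1}{d}\sum_g Z\circ g^*$ is a continuous linear idempotent on this finite-dimensional space whose fixed locus is exactly the $G$-invariant charges. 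Continuity of $H$ then follows from continuity of $\mathcal{Z}^{-1}$ on a neighbourhood, and the straight-line homotopy in $V(\Sigma)$ manifestly fixes the $G$-invariant subspace pointwise, yielding the deformation retraction onto ${\rm{Stab}}(X_L)^G$.
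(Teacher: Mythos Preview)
Your approach is essentially the paper's: keep the slicing fixed and interpolate the central charge linearly toward its $G$-average. Your convex parametrization $Z_t=(1-t)Z+\tfrac{t}{d}\sum_g Z\circ g^*$ is in fact slightly cleaner than the paper's, which (for $d=2$) uses $\widetilde Z=Z+tZ\circ g^*$; the latter rescales a $G$-invariant $Z$ by $1+t$, so the paper has to append a second homotopy undoing this scaling, whereas your version fixes ${\rm Stab}(X_L)^G$ pointwise for all $t$.

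The one place to tighten is the continuity step. Your direct estimate in the third paragraph already works once you establish the uniform bound $\sup_{A\ \text{semistable}} |Z(g^*A)|/|Z(A)|<\infty$, and this is exactly the key input in the paper: it follows because $N(X_L)\otimes\IC$ is finite-dimensional and $Z\mapsto Z\circ g^*$ is a linear operator on it (this is also why the proposition, unlike the rest of the section, genuinely needs numerical stability conditions). Your proposed shortcut via $\mathcal Z^{-1}$ is morally the same statement, but as written it needs a word of caution: $\mathcal Z$ is only a \emph{local} homeomorphism, so ``continuity of $\mathcal Z^{-1}$ on a neighbourhood'' does not by itself control the whole segment $t\in[0,1]$ without an additional argument that the path $(Z_t,\kp)$ remains in a single chart or can be covered by overlapping charts with the prescribed slicing. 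Since you already know each $(Z_t,\kp)$ is a stability condition with the same slicing, the direct mass estimate (with the finite-dimensionality bound) is the simpler route and is what the paper carries out.
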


\begin{proof}
Recall that $S \subset T$ is a deformation retract if there exists a homotopy $H\colon  T\times [0,1] \rightarrow T$ such that $H(-,0)=\id_T$, $H(-,1) \subset S$ and $H(s,1)=s$ for any $s \in S$. For details see \cite{Dold}.

The strategy of the proof is the following. Consider an element $\sigma=(Z,\kp)$ in ${\rm{Stab}}(X_L)_p$. Since $\kp$ is already $G$-invariant, it seems natural to only deform $Z$ until it also becomes $G$-invariant.

Assume for simplicity that $d=2$, so $G=\left\{1,g\right\}$, and consider the map
\[H\colon  {\rm{Stab}}(X_L)_p\times\left[0,1\right] \rightarrow {\rm{Stab}}(X_L)_p\]
sending $(\sigma, t)$ to $(\widetilde{Z},\widetilde{\kp})$, where $\widetilde{Z}(E)=Z(E)+tZ(g^*(E))$ and $\widetilde{\kp}=\kp$. Clearly $H(\sigma, 0)=\sigma$ and $H(\sigma, 1) \in {{\rm{Stab}}(X_L)^G}$. Of course, with this definition $H(-,1)\neq \id$ on ${{\rm{Stab}}}(X_L)^G$, but this is easily solved: Since for a $(Z',\kp')=\sigma' \in {{\rm{Stab}}}(X_L)^G$ we have the equality $H(\sigma',1)=(2Z',\kp')$, it is easy to write down a homotopy from $H(-,1)$ to the identity map on ${{\rm{Stab}}}(X_L)^G$. The more challenging issue is the continuity of $H$. Inspecting the proof of the previous lemma it is easy to see that $H(-,t)\colon  {\rm{Stab}}(X_L)_p \rightarrow {\rm{Stab}}(X_L)_p$ is continuous, for any $t \in \left[0,1\right]$. Now fix a $\sigma$ in ${\rm{Stab}}(X_L)_p$ and consider for simplicity only the question of continuity in $0$. Looking at the definition of the metric on ${{\rm{Stab}}}(X_L)$ (equation (2.1)), we see that the first two terms are zero, because the HN-filtrations of $H(\sigma,0)=\sigma$ and $H(\sigma,\epsilon)=:\tau$ are the same. Therefore we only need to consider the last term. Take an arbitrary semistable object $A \in \kp(\phi)$ for some $\phi \in \IR$. We know that $g^*(A)$ is again semistable of the same phase. Now, $m_\sigma(A)=|Z(A)|$ and $m_\tau(A)=(1+\epsilon\cdot \lambda_A)|Z(A)|$, where $\lambda_A=|Z(g^*(A))|/ |Z(A)|$. Thus the quotient $(m_\tau(A))/(m_\sigma(A))$ is $(1+\epsilon\cdot \lambda_A)$ and we need $\lambda_A$ to be bounded, if the last term in the metric is to remain small. Since the numerical Grothendieck group has finite rank and the linear operator $(Z\circ g^*)/Z$ on $N(X_L)\otimes \IC\supset N(X_L)$ has bounded norm, the quotient $|Z(g^*(A))|/ |Z(A)|$ is indeed bounded by some constant $C$. Thus for an arbitrary object $E \in \Db(X_L)$ we get
\[m_\tau(E)/m_\sigma(E)=\frac{\sum_i|Z(A_i)|+\epsilon \sum_i|Z(g^*(A_i))|}{\sum_i|Z(A_i)|}=1+\epsilon\frac{|Z(g^*(A_1))|+\ldots +|Z(g^*(A_n))|}{|Z(A_1)|+\ldots + |Z(A_n)|}\leq 1+\epsilon C \]
and thus $|\log(m_\tau(E)/m_\sigma(E))|$ is small provided $\epsilon$ is small enough. We conclude that $H$ is continuous. For $d>2$ the proof is similar and left to the reader.
\end{proof} 

\begin{remark}
The proofs of all results in this chapter with the exception of Proposition \ref{defo-retract} work for non-numerical locally finite stability conditions.
\end{remark}

\section{Base change via hearts} 

Recall that a stability condition $\sigma=(Z, \kp)$ can also be viewed as a pair consisting of a heart $\kd$ and a stability condition $Z$ on it, cf.\ Proposition \ref{eq-st-cond}. Thus, the stability manifold of a triangulated category $\kt$ is partitioned with respect to the hearts of bounded t-structures in $\kt$. Given a heart $\kd$ in $\Db(X_L)$ we can try to understand whether a stability condition $\sigma=(\kd, Z)$ descends to a stability condition on $\Db(X)$. One could apply a result like this to e.g.\ prove that $\Stab(X)$ is non-empty. In fact, this is precisely what we will do in the case of K3 surfaces at the end of the last section. 

We start with the following  

\begin{prop}\label{induced-sc-abelian}
Let $f\colon \kc \rightarrow \kd$ be an exact faithful functor between abelian categories. If $Z$ is a stability condition on $\kd$, then composition with $F$ induces a ( not necessarily numerical) stability condition $Z'$ on $\kc$.
\end{prop}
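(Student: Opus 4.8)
The plan is to transport the stability function $Z$ on $\kd$ back along $f$ and verify directly that the resulting homomorphism $Z' := Z \circ f_*$ on $K(\kc)$ satisfies the two defining properties, namely that it maps nonzero objects into the upper half plane $H$ and that it has the Harder--Narasimhan property. Here $f_*$ denotes the map induced on Grothendieck groups by the exact functor $f$. First I would check that $Z'$ is a well-defined group homomorphism: since $f$ is exact it sends short exact sequences in $\kc$ to short exact sequences in $\kd$, hence descends to a homomorphism $K(\kc) \rightarrow K(\kd)$, and composing with $Z$ gives a homomorphism $K(\kc) \rightarrow \IC$.

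The key point, and the place where faithfulness enters, is to show that $Z'(E) \in H$ for every $0 \neq E \in \kc$. The issue is that an exact functor can a priori kill a nonzero object, i.e.\ $f(E)$ could be zero in $\kd$ even though $E \neq 0$, in which case $Z'(E) = 0 \notin H$ and the stability-function condition fails. This is exactly what faithfulness rules out: if $f(E) = 0$ then for any morphism $\psi\colon E \rightarrow E$ we have $f(\psi) = 0$, and since $f$ is faithful this forces $\psi = 0$; applying this to $\psi = \id_E$ gives $\id_E = 0$, whence $E = 0$. Therefore $f(E) \neq 0$ for every nonzero $E$, and since $Z$ is a stability function on $\kd$ we get $Z'(E) = Z(f(E)) \in H$. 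I expect this small lemma --- that an exact faithful functor between abelian categories is conservative on nonzero objects --- to be the main technical content, though it is genuinely short.

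It then remains to establish the Harder--Narasimhan property for $Z'$ on $\kc$. The cleanest route is to invoke Proposition \ref{HN-abel} and verify the two chain conditions. Given a chain of subobjects $\cdots \subset E_{i+1} \subset E_i \subset \cdots$ in $\kc$ with $\phi'(E_{i+1}) > \phi'(E_i)$, exactness of $f$ sends the monomorphisms $E_{i+1} \mono E_i$ to monomorphisms $f(E_{i+1}) \mono f(E_i)$ in $\kd$, and by construction $\phi'(E_j) = \phi(f(E_j))$, so this produces an infinite ascending chain in $\kd$ violating the same phase inequality --- contradicting condition (a) for $Z$. The dual argument, using that $f$ preserves epimorphisms, handles the descending chains of quotients and condition (b). Since $Z$ on $\kd$ already has the HN-property, it satisfies these chain conditions, and pulling them back gives the chain conditions for $Z'$ on $\kc$; Proposition \ref{HN-abel} then yields the HN-property for $Z'$.

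Finally I would remark on the parenthetical ``not necessarily numerical'' in the statement: even if $Z$ factors through $N(\kd)$, the induced $Z'$ need not factor through $N(\kc)$, because the exact functor $f$ need not be compatible with the Euler forms on the two categories (it may fail to respect the nullspaces). So the conclusion is stated only at the level of $K$-theory, and no further argument is needed on this point beyond flagging it. The only genuine obstacle in the whole proof is the faithfulness step ensuring $Z'$ lands in $H$; everything else is a routine transport of the order-theoretic chain conditions through the exact functor.
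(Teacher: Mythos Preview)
Your proposal is correct and follows essentially the same route as the paper: define $Z'$ by composing with the map induced by $f$ on Grothendieck groups, use faithfulness (equivalently, $\ker f = 0$) to ensure nonzero objects stay nonzero, and then verify the chain conditions of Proposition~\ref{HN-abel} by pushing a hypothetical bad chain forward along the exact functor into $\kd$. You are slightly more explicit than the paper in separating out the verification that $Z'$ lands in $H$ (the paper folds this into the observation that faithfulness is equivalent to trivial kernel), but the argument is the same.
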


\begin{proof}
The composition $Z\circ [\,] \circ F$, where $ [\,] \colon  \kd \rightarrow K(\kd)$ sends an object to its class, is clearly an additive function from $\kc$ to $\IC$ and hence by the universal property of the Grothendieck group we get a group homomorphism $Z'\colon  K(\kc) \rightarrow \IC$. By Proposition \ref{HN-abel} we have to check whether there exists an infinite chain of subobjects/subquotients with increasing phases. By definition of $Z'$ we have $\phi(C)=\phi(F(C))$ for any $C \in \kc$. Assume, for example, that there exists an infinite chain of subobjects with increasing phases in $\kc$. Note that the faithfulness of $F$ is equivalent to the condition $\ker(F)=0$, where  $\ker(F)=\left\lbrace c \in \kc \; : \; F(c)\cong 0\right\rbrace$. Thus, using the triviality of the kernel and the exactness of $F$ one gets an infinite chain in $\kd$ with the same property. Since this is not possible, $Z'$ has the Harder--Narasimhan property and thus is a stability condition. 
\end{proof}

Note that if one wants to prove a similar statement for numerical stability conditions it is necessary to assume that the functor $F$ respects the nullspaces of the Euler forms, which is, for example, true if $F$ admits a right adjoint. The last condition is of course satisfied for the pullback functor.

Consider a bounded t-structure with heart $\kd$ on $\Db(X_L)$. It defines a t-structure with heart $\kc$ on $\Db(X)$ if for any object $E \in \Db(X)$ the filtration with respect to cohomology objects given by $\kd$
\[\begin{xy}
\xymatrix{ 0=F_0 \ar[rr] && F_1 \ar[r] \ar[ld] & \cdots \ar[r] & F_{n-1} \ar[rr] && F_n=p^*(E), \ar[ld]\\
					     & B_1 \ar@{-->}[lu]&& && B_{n-1} \ar@{-->}[lu]}
\end{xy}\]
where $B_i \in \kd[i]$ for all $i$, is defined over the smaller field. Equivalently, the t-structure $\kd^{\leq 0}$ in $\Db(X_L)$ descends if the full subcategory
\[\kc^{\leq 0}=\left\{ E \in \Db(X) \; | \; p^*(E) \in \kd^{\leq 0}\right\}\]
defines a t-structure on $\Db(X)$. 

Assume that a t-structure on $\Db(X_L)$ given by a heart $\kd$ descends to a t-structure on $\Db(X)$, so in particular the category 
\[\kc=\left\{E \in \Db(X) \; | \; p^*(E) \in \kd \right\}\]
is abelian. Denote the set of stability conditions on the abelian category $\kc$ resp.\ $\kd$ by ${\rm{Stab}}(\kc)$ resp.\ ${\rm{Stab}}(\kd)$.

\begin{cor}\label{heart-descent}
There exists a natural map $\alpha\colon  {\rm{Stab}}(\kd) \rightarrow {\rm{Stab}}(\kc)$.
\end{cor}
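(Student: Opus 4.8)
The plan is to realize $\alpha$ as a direct application of Proposition \ref{induced-sc-abelian} to the restriction of the pullback functor. The essential observation is that $p^*$, which is exact and faithful on the level of derived categories, restricts to an exact faithful functor between the two hearts, so the general mechanism of inducing stability conditions by precomposition with an exact faithful functor applies verbatim.

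First I would observe that, by the very definition of $\kc$, the functor $p^*\colon \Db(X)\rightarrow\Db(X_L)$ carries every object of $\kc$ into $\kd$, so it restricts to a functor $F:=p^*|_{\kc}\colon \kc\rightarrow\kd$. Faithfulness of $F$ is inherited from that of $p^*$; equivalently $\ker(F)=0$ since $\ker(p^*)=0$.

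Next I would check that $F$ is exact as a functor of abelian categories. Here one uses that the abelian structure on $\kc$ (resp.\ on $\kd$) is the one coming from the descended t-structure (resp.\ the given t-structure), so that a short exact sequence $0\rightarrow A\rightarrow B\rightarrow C\rightarrow 0$ in $\kc$ is the same datum as a distinguished triangle $A\rightarrow B\rightarrow C\rightarrow A[1]$ in $\Db(X)$ with all three vertices lying in $\kc$. Applying the triangulated functor $p^*$ produces a distinguished triangle in $\Db(X_L)$ whose vertices all lie in $\kd$, and by standard t-structure theory such a triangle is precisely a short exact sequence in $\kd$. Hence $F$ sends short exact sequences to short exact sequences and is exact.

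With $F$ an exact faithful functor $\kc\rightarrow\kd$, I would then invoke Proposition \ref{induced-sc-abelian}: any stability condition $Z$ on $\kd$ pulls back along $F$ to a stability condition $Z'=Z\circ[\,]\circ F$ on $\kc$. Setting $\alpha(Z):=Z'$ defines the required map, and naturality is immediate, as $\alpha$ is nothing but precomposition with the fixed functor $F$. I do not expect a serious obstacle here; the only point deserving care is the verification of exactness of $F$, which rests entirely on the descent hypothesis guaranteeing that $\kc$ is abelian with its t-structure-induced exact structure compatible with $p^*$.
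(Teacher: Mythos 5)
Your proof is correct and follows essentially the same route as the paper: restrict $p^*$ to a functor $F\colon \kc \rightarrow \kd$, verify exactness by identifying short exact sequences in the hearts with distinguished triangles and using that $p^*$ is triangulated, and then apply Proposition \ref{induced-sc-abelian}. The only cosmetic difference is in the faithfulness step, where the paper re-derives it via a stalk computation (the stalk of $p^*(E)$ at $y\in X_L$ is the stalk of $E$ at $p(y)$ tensorized with $L$) while you inherit it from the faithfulness of $p^*$ on $\Db(X)$, which is equally valid since $\kc$ is a full subcategory of $\Db(X)$.
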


\begin{proof}
Since short exact sequences in $\kc$ are nothing but distinguished triangles in $\Db(X)$, the pullback functor $F:=p^*\colon  \kc \rightarrow \kd$ is exact. Furthermore, $F$ is faithful, since the stalk of an object $p^*(E)$ in a point $y \in X_L$ equals the stalk of $E$ in $p(y)$ tensorized with $L$. Proposition \ref{induced-sc-abelian} then gives the result.
\end{proof}

\begin{remark}
Note that even if a t-structure $\kd^{\leq 0}$ descends to a t-structure on $\Db(X)$, it is not necessarily true that $\kd^{\leq 0}$ is $\Aut(L/K)$-invariant. A priori one only has that if
\[ \begin{xy} \xymatrix{D_1 \ar[r] & p^*(E) \ar[r] & D_2 \ar[r] & D_1[1]} \end{xy}\]
is the decomposition of $p^*(E)$, where $E \in \Db(X)$, with respect to the t-structure (i.e.\ $D_1 \in \kd^{\leq 0}$ and $D_2 \in \kd^{>0}$), then the objects $D_1$ and $D_2$ are $\Aut(L/K)$-invariant. We cannot conclude this for objects not appearing in the decomposition of some $p^*(E)$.   
\end{remark}

The morphism $\alpha$ a priori does not correspond to the morphism $p_*$ of the previous section. Of course, the stability function $Z'$ is defined in the same way and the pullback of a semistable object $E$ of phase $\phi$ in $\kc$ by definition has the same phase in $\kd$, but $p^*(E)$ is not necessarily semistable. Thus, the following definition is reasonable.

\begin{definition}
For a field extension $L/K$ define ${\rm{Stab}}(X_L)_\alpha$ to be the set of stability conditions $\sigma=(\kd, Z) \in {\rm{Stab}}(X_L)$ such that $\kd$ descends to a heart in $\Db(X)$ (i.e.\ the corresponding t-structure $\kd^{\leq 0}$ descends).
\end{definition}

\begin{remark}
In contrast to ${\rm{Stab}}(X_L)_p$ it seems difficult to show that ${\rm{Stab}}(X_L)_\alpha$ is closed in ${\rm{Stab}}(X_L)$ or that $\alpha\colon  {\rm{Stab}}(X_L)_\alpha \rightarrow {\rm{Stab}}(X)$ is continuous, the problem being that the pullback of the HN-filtration of an object $E \in \Db(X)$ is not necessarily the HN-filtration of $p^*(E)$. What one can say is that ${\rm{Stab}}(X_L)_\alpha$ is preserved by the action of $\Aut(L/K)$. One further property of ${\rm{Stab}}(X_L)_\alpha$ is described in Example \ref{tilting}. 
\end{remark}

Under some additional assumptions we can establish a connection between $\alpha$ and $p_*$.  If $\sigma=(Z,\kp) \in \Stab(X_L)$, we write $\kd$ for the heart $\kp(0,1]$ .

\begin{prop}
Let $L/K$ be a finite Galois extension and let $\sigma=(\kd, Z)$ be a stability condition on $\Db(X_L)$ such that for any $\phi \in (0,1]$ the subcategory of semistable objects in $\kd$ of phase $\phi$ is invariant under the action of the Galois group. Then the heart $\kd$ descends to $\Db(X)$ and $\alpha(Z)$ corresponds to $p_*(\sigma)=\sigma'$ . We therefore have an inclusion ${\rm{Stab}}(X_L)_p \subset {\rm{Stab}}(X_L)_\alpha$ and $\alpha_{|{\rm{Stab}}(X_L)_p}=p_*$.  
\end{prop}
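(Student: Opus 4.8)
The plan is to first recognise the hypothesis as a reformulation of $\sigma \in \Stab(X_L)_p$, then to descend the heart, and finally to match the two slicings. First I would observe that, for $\phi \in (0,1]$, the semistable objects of $\kd$ of phase $\phi$ are exactly the nonzero objects of $\kp(\phi)$; since $g^*$ commutes with the shift, (SC2) spreads the assumed $G$-invariance of the $\kp(\phi)$, $\phi\in(0,1]$, to all $\phi \in \IR$. Hence the hypothesis says precisely that the slicing $\kp$ is $G$-invariant, which by Proposition \ref{dom-def} is equivalent to $\sigma \in \Stab(X_L)_p$. Thus $p_*(\sigma)$ is a stability condition and the HN-filtration of $p^*(E)$ is defined over $K$ for every $E \in \Db(X)$. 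The part of $p^*(E)$ lying in $\kp(>0)$ is the step $E_k \subset p^*(E)$ of this filtration at which the phase drops to $\le 0$, so the truncation triangle of $p^*(E)$ for the t-structure $\kp(>0)$ is defined over $K$. As this holds for every $E$, the t-structure $\kp(>0)$ and hence its heart $\kd = \kp(0,1]$ descend to $\Db(X)$, which already gives $\sigma \in \Stab(X_L)_\alpha$ and provides the descended heart $\kc$.

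Next I would check that the stability condition $\alpha(Z)$, viewed in $\Stab(\Db(X))$ via Proposition \ref{eq-st-cond}, coincides with $p_*(\sigma)$. Their central charges agree, both being $Z' := Z \circ p^*$ on $K(X)$. Writing $\kq$ for the slicing of $\alpha(Z)$ --- so that $\kq(\phi)$ for $\phi \in (0,1]$ consists of the $Z'$-semistable objects of $\kc$ of phase $\phi$ --- and $\kp'$ for the slicing of $p_*(\sigma)$, it suffices to prove $\kq(\phi) = \kp'(\phi)$ on $(0,1]$, as both slicings obey (SC2). One inclusion is routine: if $p^*(E) \in \kp(\phi)$ and $E' \subset E$ is a subobject in $\kc$, then $p^*(E') \subset p^*(E)$ by exactness of $p^*$, whence $\phi(E') = \phi(p^*(E')) \le \phi = \phi(E)$; so $E \in \kq(\phi)$.

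The hard part --- and the main obstacle --- is the reverse inclusion $\kq(\phi) \subset \kp'(\phi)$: I must show that $Z'$-semistability of $E \in \kc$ forces $p^*(E)$ to be $\sigma$-semistable. Suppose not; then the HN-filtration of $p^*(E)$ in $\kd$ is nontrivial, with maximal destabilising subobject $B \subset p^*(E)$ of phase $\phi_1 > \phi(p^*(E))$. Because $p \circ g = p$ there is a canonical identification $g^* p^*(E) = p^*(E)$, and since $g^*$ preserves the $G$-invariant slicing it sends the HN-filtration of $p^*(E)$ to a HN-filtration of the same object; by uniqueness it fixes $B$ together with its inclusion, so $B$ is a $G$-linearised subobject of $p^*(E)$ and $B \subset p^*(E)$ is a morphism of linearised objects. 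Exactly as in the converse part of the proof of Proposition \ref{dom-def}, Lemma \ref{linearised} together with Galois descent and the full faithfulness of the linearisation functor yield $B \cong p^*(B_0)$ with $B_0 \in \Db(X)$ and a morphism $B_0 \rightarrow E$ in $\Db(X)$ restricting to the inclusion. Since $p^*(B_0) = B \in \kd$ we have $B_0 \in \kc$, and applying the exact faithful functor $p^*$ shows $B_0 \rightarrow E$ is a monomorphism in $\kc$. But $\phi(B_0) = \phi(B) = \phi_1 > \phi(E)$, contradicting the $Z'$-semistability of $E$. Hence $p^*(E) \in \kp(\phi)$, so $\kq = \kp'$ and $\alpha(Z) = p_*(\sigma)$; in particular $\alpha(Z)$ is a genuine (numerical) stability condition because $p_*(\sigma)$ is. Since every $\sigma \in \Stab(X_L)_p$ satisfies the hypothesis, this simultaneously establishes the inclusion $\Stab(X_L)_p \subset \Stab(X_L)_\alpha$ and the identity $\alpha|_{\Stab(X_L)_p} = p_*$.
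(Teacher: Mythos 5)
Your proof is correct, and its first half coincides with the paper's: both reduce the hypothesis to $G$-invariance of the slicing, invoke Proposition \ref{dom-def} to conclude $\sigma \in {\rm{Stab}}(X_L)_p$, and obtain descent of the heart from the fact that the truncation triangle of $p^*(E)$ for the t-structure $\kp(>0)$ is a step of the (descended) HN-filtration. Where you genuinely diverge is the key step, namely that $Z'$-semistability of $E \in \kc$ forces $\sigma$-semistability of $p^*(E)$. You descend the maximal destabilising subobject $B \subset p^*(E)$: uniqueness of HN-filtrations makes $B$ a linearised object, and Lemma \ref{linearised} together with Galois descent gives a subobject $B_0 \subset E$ in $\kc$ with $\phi(B_0)=\phi(B) > \phi(E)$, contradicting semistability of $E$. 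The paper instead pushes the destabiliser down: it applies the exact functor $p_*$ (which carries $\kd$ into $\kc$, since $p^*p_* = \oplus_{g\in G}\, g^*$ and $\kd$ is $G$-invariant) to a destabilising $F \subset p^*(E)$, obtaining $p_*(F) \subset p_*p^*(E)=E^{\oplus d}$, and then computes $\phi_{\sigma'}(p_*F) = \phi_\sigma(\oplus_g g^*F) = \phi_\sigma(F) > \phi_{\sigma'}(E) = \phi_{\sigma'}(E^{\oplus d})$, contradicting semistability of $E^{\oplus d}$ (a direct sum of copies of a semistable object of one phase is semistable). The paper's route is more economical: it needs only Remark \ref{comp-galois} and elementary properties of stability functions, with no second pass through the linearisation and descent machinery. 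Your route is conceptually more direct, since it destabilises $E$ itself rather than $E^{\oplus d}$, but note that it is partly redundant: once $\sigma \in {\rm{Stab}}(X_L)_p$ is known, the very definition of ${\rm{Stab}}(X_L)_p$ says the entire HN-filtration of $p^*(E)$ is the pullback of a filtration of $E$ in $\Db(X)$; its first step is exactly your $B_0$, so you need not re-run Lemma \ref{linearised} at all. With that shortcut your argument becomes as short as, and arguably cleaner than, the paper's.
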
 

\begin{proof}
Under our assumptions Proposition \ref{dom-def} shows that $\sigma$ is an element in $\Stab(X_L)_p$ and hence the heart $\kd$ descends as claimed . Thus we only have to show that for a semistable object $E \in \kc$ of phase $\phi \in (0,1]$ the object $p^*(E)$ is again semistable (clearly, if $p^*(E)$ is semistable, then so is $E$). Assuming the converse there exists a semistable object $F \subset p^*(E)$ such that $\phi_\sigma(F) > \phi_\sigma(p^*(E))$. Now, the functor $p_*$ is exact and sends any object $X \in \kd$ to an object in $\kc$, since $\kc= \left\{ Y \in \Db(X) \; | \; p^*(Y) \in \kd \right\}$ and $p^*(p_*(X))=\oplus_{g \in G} g^*(X) \in \kd$. Thus, we can apply $p_*$ to the inclusion $F \subset p^*(E)$ and get an inclusion $p_*(F) \subset p_*p^*(E)=E^{\oplus d}$. Since $E$ is semistable in $\kc$, so is $E^{\oplus d}$ and the phases are equal. But $p_*(F)$ is a destabilizing object of $E^{\oplus d}$ since 
\[\phi_{\sigma'}(p_*(F))=\phi_\sigma(p^*p_*(F))=\phi_\sigma (\oplus_{g \in G}g^*(F))=\phi_\sigma(F)>\phi_\sigma(p^*(E))=\phi_{\sigma'}(E)=\phi_{\sigma'}(E^{\oplus d}).\]
This is a contradiction, therefore an object $E$ is semistable if and only if $p^*(E)$ is semistable. 
\end{proof}

We can say a little bit more about descent of hearts using the theory of tilting. To do this first recall that
a \emph{torsion pair} in an abelian category $\ka$ consists of two full additive subcategories $(\kt, \kf)$ such that for any $T \in \kt$ and $F \in \kf$ we have $\Hom(T, F)=0$ and furthermore for any object $A \in \ka$ there exists an exact sequence
\[
0 \rightarrow T \rightarrow A \rightarrow F \rightarrow 0
\]
with $T \in \kt$ and $F \in \kf$. Note that the exact sequence is unique up to isomorphism.

\begin{ex}\label{tilting}
Assume that $L/K$ is finite Galois, let $\sigma=(\kd, Z)$ be a stability condition on $\Db(X_L)$ such that its heart $\kd$ descends to $\Db(X)$ and assume that there is a torsion pair $(\kt,\kf)$ in $\kd$. Denote the descended heart by $\kc$. We can consider the tilt of $\kd$ with respect to the torsion pair, which gives us a new heart $\kd^\sharp$.
If the torsion pair $(\kt, \kf)$ is invariant under the Galois group (i.e.\ $g^*(T) \in \kt$ and $g^*(F) \in \kf$ for any $T \in \kt$, $F\in \kf$ and $g \in G$), then it is fairly easy to see that the categories
\[ \kt':=\left\{ E \in \kc \; | \; p^*(E) \in \kt \right\} \;\;\; {\rm{and}} \;\;\; \kf':=\left\{ E \in \kc \; | \; p^*(E) \in \kf \right\}\]
define a torsion pair in $\kc$: Clearly $\kt'$ and $\kf'$ satisfy the first requirement of a torsion pair. To see the second consider an arbitrary object $C \in \kc$ and the decomposition of its pullback with respect to the pair $(\kt, \kf)$
\[ \begin{xy}\xymatrix{0 \ar[r] & T \ar[r] & p^*(C) \ar[r] & F \ar[r] & 0.}\end{xy}\]
Applying an arbitrary $g \in G$ to this sequence and using that the torsion pair is invariant, we conclude that the objects $T$ and $F$ are linearised and therefore $T\cong p^*(T')$ and $F\cong p^*(F')$ for some uniquely determined $T' \in \kt'$ and $F' \in \kf'$. Thus, $\kt'$ and $\kf'$ indeed define a torsion pair and this gives a new heart $\kc'$. We therefore get a morphism ${\rm{Stab}}(\kd') \rightarrow {\rm{Stab}}(\kc')$ and conclude that the subset ${\rm{Stab}}(X_L)_\alpha$ is closed under tilting with respect to Galois-invariant torsion pairs.

Explicit examples can be constructed as follows. Let $\kd=\Coh(X_L)$ be the heart of the standard t-structure on $\Db(X_L)$ which clearly descends. We know that Harder--Narasimhan filtrations are stable under base change (cf.\ \cite[Thm.\ 1.3.7]{HL}). Sometimes it is possible to define a torsion pair using the data from the HN-filtration and by the stability of the filtration the torsion pair is Galois-invariant. See the end of the next section for an appearance of this technique in the case of a complex K3 surface and the extension $\IC/\IR$. 
\end{ex}

\section{Grothendieck groups}

In this subsection we will return to the general case and consider non-numerical stability conditions as well. To maintain continuity we will keep the notation ${\rm{Stab}}(Y)$ for the numerical stability manifold of a variety $Y$ and we will write ${\rm{Stab}}_*(Y)$ for the manifold of all locally finite stability conditions.

Theorem \ref{stab-mfd} tells us that the (numerical) stability manifold of a variety $Y$ is locally homeomorphic to a subspace of $\Hom(K(Y),\IC)$ (resp.\ $\Hom(N(Y),\IC)$). Thus, it is natural to ask what happens with the (numerical) stability manifold under scalar extension if we have an isomorphism $K(X_L)\cong K(X)$ (resp.\ $N(X_L)\cong N(X)$).

The next proposition gives a first answer under a slightly weaker assumption: 

\begin{prop}\label{isom-k-groups}
Assume that $L/K$ is a finite Galois extension and that the group homomorphism 
\begin{equation}
K(X)\otimes\IC \rightarrow K(X_L)\otimes \IC
\end{equation}
induced by $p^*$ is an isomorphism. Then ${\rm{Stab}}_*(X_L)_p={{\rm{Stab}}}_*(X_L)^G$. Similarly, if we have an isomorphism for the numerical Grothendieck groups, then ${\rm{Stab}}(X_L)_p={{\rm{Stab}}}(X_L)^G$.
\end{prop}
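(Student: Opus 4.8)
The plan is to prove the two inclusions separately. The inclusion ${\rm{Stab}}_*(X_L)^G \subseteq {\rm{Stab}}_*(X_L)_p$ is free of charge: it is exactly the last assertion of Proposition \ref{dom-def} and uses no hypothesis on the Grothendieck groups. Hence all the content lies in the reverse inclusion ${\rm{Stab}}_*(X_L)_p \subseteq {\rm{Stab}}_*(X_L)^G$. So I would fix $\sigma=(Z,\kp)\in {\rm{Stab}}_*(X_L)_p$ and recall from the proof of Proposition \ref{image-beta} that $g(\sigma)=(Z\circ g_*,\, g^*\kp)$. By Proposition \ref{dom-def} the slicing is already $G$-invariant, i.e.\ $g^*\kp(\phi)=\kp(\phi)$ for all $g\in G$ and all $\phi$ (apply the implication there both to $g$ and to $g^{-1}$). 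Therefore the only point left to verify, in order to conclude $g(\sigma)=\sigma$, is the equality of central charges $Z\circ g_*=Z$ for every $g\in G$.

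The key step is to extract from the isomorphism hypothesis that $G$ acts trivially on $K(X_L)\otimes\IC$. First I would tensor the functorial identities of Remark \ref{comp-galois} with $\IC$, obtaining on $K(X_L)\otimes\IC$ the relations $p_*\circ p^*=d\cdot\id$ and $p^*\circ p_*=\sum_{g\in G} g^*$. If $p^*$ is an isomorphism then $p_*=d\,(p^*)^{-1}$, whence $p^*\circ p_*=d\cdot\id$ as well, and comparing the two expressions for $p^*\circ p_*$ yields
\[
\sum_{g\in G} g^* \;=\; \sum_{g\in G} g_* \;=\; d\cdot\id \qquad\text{on } K(X_L)\otimes\IC,
\]
where I used that $g\mapsto g^{-1}$ permutes $G$ and $(g^{-1})^*=g_*$. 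Now, since $g\mapsto g_*$ is a genuine homomorphism, for any fixed $h\in G$ left composition by $h_*$ merely reindexes the sum, $h_*\circ\sum_{g} g_*=\sum_{g}(hg)_*=\sum_{g'} g'_*$, so $h_*\cdot(d\cdot\id)=d\cdot\id$; as $d\neq 0$ in $\IC$ this forces $h_*=\id$, and consequently also $h^*=(h^{-1})_*=\id$, on $K(X_L)\otimes\IC$.

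It then remains to transfer this to the central charge. As $\IC$ is torsion-free, $Z$ extends uniquely to a $\IC$-linear map $Z_\IC\colon K(X_L)\otimes\IC\to\IC$, and for any object $E$ one has $Z(g_*E)=Z_\IC([g_*E]\otimes 1)=Z_\IC\big((g_*)_\IC([E]\otimes 1)\big)=Z_\IC([E]\otimes 1)=Z(E)$, because $(g_*)_\IC=\id$. Hence $Z\circ g_*=Z$, and together with the $G$-invariance of $\kp$ this gives $g(\sigma)=\sigma$ for all $g\in G$, i.e.\ $\sigma\in {\rm{Stab}}_*(X_L)^G$, establishing the reverse inclusion. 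The numerical statement is proved verbatim, replacing $K(X_L)$ by $N(X_L)$ throughout and using that $p^*$, $p_*$ and the $g^*$ descend to the numerical Grothendieck groups.

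I do not expect a genuine obstacle here: the argument is essentially a Reynolds-operator computation made available by Remark \ref{comp-galois}, together with Proposition \ref{dom-def} which already handles the slicing. The only points requiring a little care are the bookkeeping between $g^*$ and $g_*$ (the assignment $g\mapsto g^*$ is only an anti-homomorphism), and the observation that working over $\IC$ makes the scalar $1/d$ available regardless of the characteristic of $K$, so that the standing convention on the order of $G$ does not intervene in this particular argument.
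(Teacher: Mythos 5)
Your proof is correct, and its skeleton is the one the paper uses as well: the easy inclusion ${\rm{Stab}}_*(X_L)^G\subseteq{\rm{Stab}}_*(X_L)_p$ and the $G$-invariance of the slicing are both quoted from Proposition \ref{dom-def}, so that everything reduces to proving $Z\circ g_*=Z$. Where you differ is in how that last invariance is established. The paper dualizes the hypothesis directly: since $p_*$ also becomes an isomorphism on the complexified Grothendieck groups (because $p_*p^*=d\cdot\id$), every central charge factors as $Z=Z'\circ p_*$ with $Z'\in\Hom(K(X),\IC)$, and then $Z(g^*(E))=Z'(p_*g^*(E))=Z'(p_*(E))=Z(E)$ follows from the trivial geometric identity $p\circ g=p$; the formula $p^*p_*=\sum_{g\in G}g^*$ is never needed. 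You instead combine both identities of Remark \ref{comp-galois} to get $\sum_{g\in G}g^*=d\cdot\id$ on $K(X_L)\otimes\IC$ and then run the averaging (Reynolds) trick to conclude that every $g^*$, hence every $g_*$, acts as the identity on $K(X_L)\otimes\IC$. Your route proves a strictly stronger intermediate fact --- the Galois action on $K(X_L)\otimes\IC$ is trivial --- which immediately gives invariance of \emph{any} functional on $K(X_L)$, at the cost of some extra bookkeeping between the anti-homomorphism $g\mapsto g^*$ and the homomorphism $g\mapsto g_*$; the paper's factorization is shorter and avoids the averaging altogether. One small caveat: your closing remark that the standing convention on ${\rm char}(K)$ versus $|G|$ ``does not intervene'' is accurate for the averaging computation itself, but not for the proposition as a whole, since the inclusion ${\rm{Stab}}_*(X_L)^G\subseteq{\rm{Stab}}_*(X_L)_p$ rests on Proposition \ref{dom-def}, whose proof goes through Lemma \ref{linearised} and Galois descent and does use that convention.
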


\begin{proof}
Let $\sigma=(Z,\kp) \in {\rm{Stab}}_*(X_L)_p$. We need to show that $Z$ is constant on the orbits of the action of the Galois group $G$. Note that our assumption gives an isomorphism 
\[\begin{xy} \xymatrix{\Hom(K(X), \IC) \ar[r]^\cong & \Hom(K(X_L),\IC).} \end{xy}\]
We therefore can write $Z=Z' \circ p_*$ for some $Z' \in \Hom(K(X), \IC)$. For an object $E \in \Db(X_L)$ and any $g \in G$ we then have
\[Z(g^*(E))=Z'p_*(g^*(E))=Z'p_*(E)=Z(E)\]
as claimed. The proof in the numerical case is similar.
\end{proof}

\begin{remark}
If (5.1) is an isomorphism, then it is easy to see that $\Stab(X)_\alpha=\Stab(X)_p$ and $\alpha=p_*$.
\end{remark}

Now recall from \cite[Ch.\ 6]{Bri1} that for any $\sigma=(Z, \kp)$ in the stability manifold of a variety $Y$ one can define a generalised norm on the vector space $\Hom(K(Y), \IC)$ by setting, for $U \in \Hom(K(Y), \IC)$:
\[\| U\|_\sigma=\sup \left\{ |U(E)|/|Z(E)| \; , \; E \; \rm{semistable \; in} \; \sigma \right\} \in [0, \infty].\]
In fact, for any connected component $\Sigma \subset {\rm{Stab}}(Y)$ the subspace $V(\Sigma)$ of Theorem \ref{stab-mfd} is the subspace of functions $U$ for which the norm is finite. Bridgeland proves furthermore that if $\sigma$ and $\tau$ are in the same connected component $\Sigma$, then the norms defined by these stability conditions are equivalent on $V(\Sigma)$.

\begin{lem}
Assume that (5.1) is an isomorphism and consider $\sigma=(Z, \kp) \in {\rm{Stab}}_*(X_L)_p$ and $p_*(\sigma) \in {\rm{Stab}}_*(X)$. Then the map
\[\begin{xy} \xymatrix{\Hom(K(X), \IC) \ar[r]^\cong & \Hom(K(X_L),\IC)} \end{xy}\] 
induced by $p^*$ is continuous with respect to the topologies induced by $\sigma$ and $p_*(\sigma)$. The same assertion holds for 
\[\begin{xy} \xymatrix{\Hom(K(X_L), \IC) \ar[r]^\cong & \Hom(K(X),\IC),} \end{xy}\]
where we consider the topologies induced by some $\sigma' \in {\rm{Stab}}_*(X)$ and $p^*(\sigma')$. In particular, these maps are homeomorphisms.
\end{lem}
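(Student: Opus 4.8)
The plan is to exploit the norm characterization of the topology on the stability manifold recalled just above, combined with the fact that $p^*$ and $p_*$ are mutually inverse (up to scalar) maps on Grothendieck groups under assumption (5.1). Recall that for a fixed $\sigma=(Z,\kp)$ the topology on $\Hom(K(X_L),\IC)$ is the one induced by the generalised seminorm $\|U\|_\sigma=\sup\{|U(E)|/|Z(E)| \mid E \text{ semistable in } \sigma\}$, and that continuity of a linear map between these normed spaces amounts to a bound of the form $\|U\circ p^*\|_{p_*(\sigma)} \leq C\,\|U\|_\sigma$ (and symmetrically). So the entire statement reduces to establishing such comparison inequalities between the two seminorms.

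First I would spell out how the seminorm $\|\cdot\|_{p_*(\sigma)}$ on $\Hom(K(X),\IC)$ is computed. By Lemma \ref{betaalpha} and Proposition \ref{dom-def}, since $\sigma \in {\rm{Stab}}_*(X_L)_p$, its slicing is $G$-invariant and the semistable objects of $p_*(\sigma)=(Z',\kp')$ are controlled by those of $\sigma$: an object $E\in\Db(X)$ lies in $\kp'(\phi)$ precisely when $p^*(E)\in\kp(\phi)$, and the central charges are related by $Z'=Z\circ p^*$ together with $Z\circ p_*=d\,Z'\circ(\text{restriction})$ coming from $p_*p^*(E)=E^{\oplus d}$ and $p^*p_*(F)=\oplus_{g\in G}g^*(F)$. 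The key point I would extract is that the semistable objects on the two sides correspond under $p^*$ and $p_*$, and that the masses transform by bounded factors: for $E$ semistable on $X$, $|Z'(E)|=|Z(p^*(E))|$, while for $F$ semistable on $X_L$ one has $|Z(p_*(F))|$ comparable to $|Z(F)|$ because $p^*p_*(F)=\oplus_{g}g^*(F)$ is a sum of $G$-translates of $F$, each of the same phase and with masses controlled by the bounded operator $(Z\circ g^*)/Z$ on the finite-rank space $N(X_L)\otimes\IC$ (exactly the boundedness argument already used in the proof of Proposition \ref{defo-retract}).

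Given this dictionary, I would verify the two inequalities directly. For $U\in\Hom(K(X),\IC)$, to bound $\|U\circ p^*\|_\sigma$ in terms of $\|U\|_{p_*(\sigma)}$ I take the supremum defining the left-hand side over objects $F$ semistable in $\sigma$; writing the ratio $|U(p^*(F))|/|Z(F)|$ and using $p_*$ to pass to semistable objects on $X$ (via $p^*p_*(F)=\oplus_g g^*(F)$ and the bounded mass factors), I control it by $C\,\|U\|_{p_*(\sigma)}$. The reverse inequality is entirely symmetric, using $p_*p^*(E)=E^{\oplus d}$ so that $\|\cdot\|$ changes only by the harmless factor $d$. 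Since (5.1) is an isomorphism, the induced map on $\Hom$-spaces is a linear isomorphism, so these mutual boundedness estimates show it is a homeomorphism; the second displayed map is the inverse of the first (up to the scalar $1/d$), hence also a homeomorphism.

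The main obstacle I anticipate is the mass comparison for objects that are semistable on one side but whose image under $p^*$ or $p_*$ is only a direct sum of semistables rather than a single semistable object. The seminorm is defined as a supremum over semistable objects, and a priori a semistable $F$ on $X_L$ need not remain semistable after $p_*$; one must instead decompose $p_*(F)$ into its HN-factors and show the ratio of masses stays bounded. This is where the finite rank of $N(X_L)\otimes\IC$ and the bounded-operator-norm estimate for $(Z\circ g^*)/Z$ are essential: they guarantee a uniform constant $C$ independent of the object, which is exactly what the continuity claim requires. Once that uniform bound is in hand, the rest of the argument is the routine translation between a seminorm inequality and continuity of a linear map.
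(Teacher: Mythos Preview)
Your overall strategy---reducing continuity to an inequality between the seminorms $\|\cdot\|_\sigma$ and $\|\cdot\|_{p_*(\sigma)}$---is the paper's strategy too, but the paper's execution is much simpler and sidesteps precisely the obstacle you worry about. The inequality $\|U\circ p^*\|_{p_*(\sigma)}\le\|U\|_\sigma$ is \emph{immediate} from the definitions: an object $F\in\Db(X)$ is $p_*(\sigma)$-semistable exactly when $p^*(F)$ is $\sigma$-semistable, and the term $|U(p^*F)|/|Z(p^*F)|$ in the left-hand supremum is literally one of the terms in the right-hand supremum. The analogous inequality for precomposition with $p_*$ (topologies from $\sigma'$ and $p^*(\sigma')$) is equally trivial. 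For the homeomorphism, the paper does not prove a reverse inequality directly; instead it applies the second easy inequality with $\sigma'=p_*(\sigma)$ and uses that, under (5.1), Proposition~\ref{isom-k-groups} forces $\sigma\in\Stab_*(X_L)^G$, whence $p^*p_*(\sigma)=(dZ,\kp)$ by Lemma~\ref{betaalpha}, so the round trip only rescales the norm by $d$.

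In particular, your ``main obstacle'' never arises: under (5.1) each $g^*$ acts as the identity on $K(X_L)\otimes\IC$ (since $g^*p^*=p^*$ and $p^*$ is onto), so $\sigma$ is fully $G$-invariant and $p_*(F)$ \emph{is} $p_*(\sigma)$-semistable whenever $F$ is $\sigma$-semistable. The bounded-operator-norm estimate you import from Proposition~\ref{defo-retract} is therefore unnecessary; note also that that estimate uses the finite rank of $N(X_L)$, so citing it verbatim in the $\Stab_*$ setting would itself need justification (though the observation that $g^*=\id$ on $K(X_L)\otimes\IC$ makes the ratio identically~$1$ anyway). Finally, watch the direction of your maps: for $U\in\Hom(K(X),\IC)$ the composite $U\circ p^*$ does not typecheck, since $p^*\colon K(X)\to K(X_L)$; the paper takes $U\in\Hom(K(X_L),\IC)$ and sets $V=U\circ p^*\in\Hom(K(X),\IC)$.
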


\begin{proof}
Let $V=U\circ p^* \in \Hom(K(X), \IC)$ and recall that $p_*(\sigma)=(Z\circ p^*, \kp')$. Then

\begin{align*}
\| V\|_{p_*(\sigma)}&=\sup \left\{ |V(F)|/|Zp^*(F)| \; , \; F \; {\rm{semistable \; in}} \; p_*(\sigma)\right\}=\\
&=\sup \left\{ |Up^*(F)|/|Zp^*(F)| \; , \; F \; {\rm{semistable \; in}} \; p_*(\sigma)\right\}\leq \|U\|_\sigma
\end{align*}

since $F$ by definition is $p_*(\sigma)$-semistable if $p^*(F)$ is $\sigma$-semistable. The proof for $p_*$ is similar.

Now by Proposition \ref{alphabeta}, Lemma \ref{betaalpha} and Proposition \ref{isom-k-groups} we know that $p^*p_*(\sigma)=(dZ,\kp)$ and $p_*p^*(\sigma')=(dZ',\kp')$. Therefore the above already shows that the maps are homeomorphisms.
\end{proof}

\begin{cor}
Consider an element $\sigma \in \Sigma_L \subset {\rm{Stab}}_*(X_L)_p$, where $\Sigma_L$ is some connected component of ${\rm{Stab}}_*(X_L)_p$ of dimension $k$, and assume that (5.1) is an isomorphism. Then $p_*(\sigma)$ lies in a connected component $\Sigma$ of dimension $k$. Similarly, the dimension remains the same under $p^*$. The same holds for numerical stability conditions. 
\end{cor}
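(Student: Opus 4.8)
The plan is to deduce the statement from the preceding lemma, using the principle that the dimension of a connected component of the stability manifold equals the dimension of its associated space of central charges. Let me treat the assertion for $p_*$ first and write $\Sigma\subset{\rm Stab}_*(X)$ for the connected component containing $p_*(\sigma)$. By Theorem \ref{stab-mfd} the central-charge maps $\mathcal Z_L\colon\Sigma_L\to V(\Sigma_L)\subset\Hom(K(X_L),\IC)$ and $\mathcal Z\colon\Sigma\to V(\Sigma)\subset\Hom(K(X),\IC)$ are local homeomorphisms, so $\dim\Sigma_L=\dim V(\Sigma_L)=k$ and it suffices to prove $\dim V(\Sigma)=\dim V(\Sigma_L)$.

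First I would note that the central charge of $p_*(\sigma)$ is $Z\circ p^*$, so at the level of central charges $p_*$ is realised by the linear map $\Phi\colon\Hom(K(X_L),\IC)\to\Hom(K(X),\IC)$, $U\mapsto U\circ p^*$, which is an isomorphism of the ambient spaces because (5.1) is assumed to be an isomorphism. Recalling from \cite[Ch.\ 6]{Bri1} that $V(\Sigma_L)$ (resp.\ $V(\Sigma)$) is precisely the subspace of functionals of finite $\sigma$-norm (resp.\ finite $p_*(\sigma)$-norm), the two norm inequalities furnished by the preceding lemma---namely $\|U\circ p^*\|_{p_*(\sigma)}\le\|U\|_\sigma$ together with the symmetric bound for the inverse map---say exactly that $U$ has finite $\sigma$-norm if and only if $\Phi(U)$ has finite $p_*(\sigma)$-norm. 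Hence $\Phi$ restricts to a linear isomorphism $V(\Sigma_L)\cong V(\Sigma)$, and therefore $\dim\Sigma=\dim V(\Sigma)=\dim V(\Sigma_L)=k$. In identifying the inverse of $\Phi$ on these distinguished subspaces one uses $p^*p_*(\sigma)=(dZ,\kp)$, which comes from Lemma \ref{betaalpha} and the $G$-invariance of $\sigma$ granted by Proposition \ref{isom-k-groups}, since this stability condition shares the slicing of $\sigma$ and thus defines an equivalent norm.

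The assertion for $p^*$ is entirely symmetric. Given $\sigma'$ in a component of ${\rm Stab}_*(X)$ of dimension $k$, the central charge of $p^*(\sigma')$ is $Z'\circ p_*$, and one repeats the argument with the roles of $X$ and $X_L$ interchanged, now invoking the second map of the preceding lemma and the identity $p_*p^*(\sigma')=(dZ',\kp')$ from Lemma \ref{alphabeta}. Finally, the numerical case is obtained by the same proof verbatim, replacing $K(\,\cdot\,)$ by $N(\,\cdot\,)$ and ${\rm Stab}_*$ by ${\rm Stab}$ throughout, and using the numerical hypothesis that $N(X)\otimes\IC\to N(X_L)\otimes\IC$ is an isomorphism together with the numerical form of the preceding lemma.

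The only step requiring genuine care is the claim that $\Phi$ carries $V(\Sigma_L)$ \emph{onto} all of $V(\Sigma)$, not merely into it: the inclusion $\Phi(V(\Sigma_L))\subseteq V(\Sigma)$ is immediate from the single inequality $\|U\circ p^*\|_{p_*(\sigma)}\le\|U\|_\sigma$, whereas surjectivity needs the reverse estimate for the inverse map together with the fact---guaranteed by (5.1)---that $\Phi$ is already a bijection of the ambient $\Hom$-spaces. This is exactly the point where both halves of the preceding lemma, and not just one, are indispensable; all remaining steps are routine bookkeeping.
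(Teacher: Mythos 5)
Your proof is correct and takes essentially the same approach as the paper: identify the dimension of a component with that of its space $V$ of finite-norm central charges, use both norm estimates of the preceding lemma to show that the ambient isomorphism induced by (5.1) restricts to an isomorphism $V(\Sigma_L)\cong V(\Sigma)$, and close the loop via $p^*p_*(\sigma)=(dZ,\kp)$ resp.\ $p_*p^*(\sigma')=(dZ',\kp')$ from Lemmas \ref{betaalpha} and \ref{alphabeta}. The only cosmetic difference is that the paper justifies the needed norm equivalence by noting these compositions lie in the same connected component as $\sigma$ resp.\ $\sigma'$, whereas you observe directly that they share the slicing with rescaled central charge; these amount to the same point.
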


\begin{proof}
Follows immediately from the fact that $k$ is the dimension of $V(\Sigma_L)=\left\{ U \; , \; \| U\|_\sigma < \infty \right\}$, the above computation and the fact that $p_*p^*(\sigma')$ respectively $p^*p_*(\sigma)$ are in the same connected component as $\sigma$ resp.\ $\sigma'$ by Lemma \ref{alphabeta} resp.\ Lemma \ref{betaalpha}. The same proof works in the numerical case.
\end{proof}
 
\begin{prop}
Let (5.1) be an isomorphism, $\Sigma$ be a component in ${\rm{Stab}}_*(X)$ and $\Sigma_L$ the component of the same dimension in ${\rm{Stab}}_*(X_L)$ containing $p^*(\Sigma)$. Then we have $p^*(\Sigma)=\Sigma_L$. The same assertions hold for numerical stability conditions.
\end{prop}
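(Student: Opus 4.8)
The plan is to prove the equality $p^*(\Sigma) = \Sigma_L$ by showing that the inclusion $p^*(\Sigma) \subset \Sigma_L$ is both open and closed in $\Sigma_L$; since $\Sigma_L$ is connected and $p^*(\Sigma)$ is non-empty, this forces equality. First I would observe that $p^*$ restricts to a map $\Sigma \to \Sigma_L$ whose image is contained in the $G$-invariant locus ${\rm{Stab}}_*(X_L)^G$ (Proposition \ref{image-beta}), and that by the previous corollary $\dim \Sigma_L = \dim \Sigma = k$. The key structural input is that under assumption (5.1) the map $p^*$ is, up to the scaling by $h=(\frac{1}{d}\cdot\id,\id)$, a two-sided inverse of $p_*$: by Lemma \ref{alphabeta} and Lemma \ref{betaalpha} we have $p_*p^* = h$ and, on the $G$-invariant locus, $p^*p_* = h$ as well, and Proposition \ref{isom-k-groups} guarantees that $p^*({\rm{Stab}}_*(X)) = {\rm{Stab}}_*(X_L)_p = {\rm{Stab}}_*(X_L)^G$.

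\emph{Openness.} Since $p_*$ and $p^*$ are both contractions for the generalised metric and $p_* \circ p^*$ is the homeomorphism given by the action of $h$, the map $p^* \colon \Sigma \to \Sigma_L$ is a homeomorphism onto its image. By Theorem \ref{stab-mfd} the central-charge map $\mathcal Z$ is a local homeomorphism onto the subspace $V(\Sigma_L) \subset \Hom(K(X_L),\IC)$, and the preceding lemma shows that $p^*$ induces a homeomorphism $V(\Sigma) \isomor V(\Sigma_L)$ for the relevant topologies. Since $\Sigma$ and $\Sigma_L$ have the same dimension $k$, the image $p^*(\Sigma)$ contains an open neighbourhood of each of its points in $\Sigma_L$: locally $p^*$ is modelled on the isomorphism of $k$-dimensional spaces $V(\Sigma) \to V(\Sigma_L)$, so a point $\sigma \in \Sigma$ and its image $p^*(\sigma)$ have homeomorphic neighbourhoods, both of dimension $k$, and $p^*$ carries one onto the other. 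Hence $p^*(\Sigma)$ is open in $\Sigma_L$.

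\emph{Closedness.} For this I would use that ${\rm{Stab}}_*(X_L)^G$ is closed in ${\rm{Stab}}_*(X_L)$ (the cited \cite[Lem.\ 2.15]{MMS}), so $p^*({\rm{Stab}}_*(X)) = {\rm{Stab}}_*(X_L)^G$ is closed; intersecting with $\Sigma_L$ shows $p^*({\rm{Stab}}_*(X)) \cap \Sigma_L$ is closed in $\Sigma_L$. It then remains to see that this intersection is exactly $p^*(\Sigma)$, i.e.\ that no component of ${\rm{Stab}}_*(X)$ other than $\Sigma$ can map into $\Sigma_L$; this follows because $p_*$ sends $\Sigma_L$ back into a single component of ${\rm{Stab}}_*(X)$ (again the dimension-preserving corollary together with $p_* p^* = h$ acting within one component), and $p_*$ is a left inverse of $p^*$ up to the homeomorphism $h$, so $p_*(\Sigma_L) = \Sigma$. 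Therefore $p^*(\Sigma) = p^*({\rm{Stab}}_*(X)) \cap \Sigma_L$ is closed in $\Sigma_L$.

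Combining the two, $p^*(\Sigma)$ is a non-empty subset of the connected space $\Sigma_L$ that is simultaneously open and closed, hence $p^*(\Sigma) = \Sigma_L$. The numerical case is identical, replacing $K$ by $N$ throughout and invoking the numerical versions of Theorem \ref{stab-mfd} and Proposition \ref{isom-k-groups}. The main obstacle I anticipate is the openness step: one must be careful that $p^*$ is genuinely a local homeomorphism onto an \emph{open} subset of $\Sigma_L$ and not merely a homeomorphism onto a lower-dimensional or thin image. The equality of dimensions supplied by the previous corollary, together with invariance of domain applied via the local charts $\mathcal Z$ and the homeomorphism $V(\Sigma) \isomor V(\Sigma_L)$ of the preceding lemma, is precisely what rules this out, so the crux is to assemble these local-chart identifications cleanly rather than any single hard estimate.
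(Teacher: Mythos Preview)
Your overall architecture---show $p^*(\Sigma)$ is non-empty, open and closed in the connected set $\Sigma_L$---is exactly the paper's, and your openness argument via the local charts $\mathcal Z$ and the homeomorphism $V(\Sigma)\isomor V(\Sigma_L)$ from the preceding lemma is essentially the paper's commutative-diagram argument.

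There is, however, a circularity in your closedness step. You argue that no other component $\Sigma'$ of $\Stab_*(X)$ can land in $\Sigma_L$ by applying $p_*$ to $\Sigma_L$ and saying $p_*(\Sigma_L)=\Sigma$. But $p_*$ is only defined on $\Stab_*(X_L)_p$, and you do not yet know that $\Sigma_L\subset\Stab_*(X_L)_p$; indeed, that containment is equivalent (under (5.1)) to $\Sigma_L\subset\Stab_*(X_L)^G=p^*(\Stab_*(X))$, which is what you are trying to prove. So the sentence ``$p_*$ sends $\Sigma_L$ back into a single component'' presupposes the conclusion.

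The paper avoids this by invoking Corollary~\ref{closed-submf} directly: $p^*$ is a homeomorphism of $\Stab_*(X)$ onto the closed submanifold $\Stab_*(X_L)^G$, so it carries the connected component $\Sigma$ to a connected component of $\Stab_*(X_L)^G$. Connected components of a manifold are closed in that manifold, and $\Stab_*(X_L)^G$ is closed in $\Stab_*(X_L)$, hence $p^*(\Sigma)$ is closed in $\Stab_*(X_L)$ and a fortiori in $\Sigma_L$. This bypasses any need to evaluate $p_*$ on points of $\Sigma_L$ not yet known to lie in its domain. Replacing your closedness paragraph with this observation makes the argument complete and matches the paper's proof.
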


\begin{proof}
Recall that we have an isomorphism between $V(\Sigma)$ and $V(\Sigma_L)$. Let $\sigma=(Z, \kp)=p^*(\sigma')=p^*(Z', \kp')$ be an element in $p^*(\Sigma) \subset \Sigma_L$, $U \subset \Sigma_L$ be an open neighbourhood of $\sigma$ homeomorphic to $U'\subset V(\Sigma_L)$ and $V$ be an open neighbourhood of $\sigma'$ homeomorphic to $V'\subset V(\Sigma)$. Restricting to the intersection $p_*(V') \cap U'$ if necessary and abusing notation we have a commutative diagram
\[\begin{xy}
\xymatrix{ V \ar[r]^{p^*} \ar[d]^\cong & U \ar[d]^\cong \\
					V(\Sigma) \ar[r]^\cong & V(\Sigma_L).}
\end{xy}\]
This shows that $p^*(\Sigma) \subset \Sigma_L$ is open. Since it is also closed by Corollary \ref{closed-submf} and $\Sigma_L$ is connected, we have the claimed equality. The proof for numerical stability conditions is similar.
\end{proof}

\begin{cor}\label{stab-iso}
If $N(X)\otimes \IC \cong N(X_L)\otimes \IC$, ${\rm{Stab}}(X_L)$ is connected and ${\rm{Stab}}(X)$ is non-empty, then ${\rm{Stab}}(X)\cong {\rm{Stab}}(X_L)$. A similar statement holds for the manifolds of all locally finite stability conditions.\qqed
\end{cor}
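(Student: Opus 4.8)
The plan is to deduce the statement directly from the preceding Proposition together with the injectivity of $p^*$. Since ${\rm{Stab}}(X)$ is non-empty, I would first fix a connected component $\Sigma \subset {\rm{Stab}}(X)$. Applying $p^*$ and invoking the dimension-preservation established just above, the image $p^*(\Sigma)$ lies in a connected component $\Sigma_L \subset {\rm{Stab}}(X_L)$ of the same dimension as $\Sigma$, and the preceding Proposition then yields $p^*(\Sigma)=\Sigma_L$. Because ${\rm{Stab}}(X_L)$ is assumed connected, this forces $\Sigma_L = {\rm{Stab}}(X_L)$, so that $p^*$ maps $\Sigma$ homeomorphically onto all of ${\rm{Stab}}(X_L)$.

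It remains to check that $\Sigma$ is in fact all of ${\rm{Stab}}(X)$. Here I would use that $p^*$ is injective (Corollary \ref{beta-inj}): if there were a further component $\Sigma' \subset {\rm{Stab}}(X)$, then $p^*(\Sigma')$ would be a non-empty subset of ${\rm{Stab}}(X_L)$ disjoint from $p^*(\Sigma)={\rm{Stab}}(X_L)$, which is absurd. Hence ${\rm{Stab}}(X)=\Sigma$ is connected and $p^*\colon {\rm{Stab}}(X)\to {\rm{Stab}}(X_L)$ is a continuous bijection. That it is a homeomorphism is already known: by the discussion in Section 3, $p^*$ identifies ${\rm{Stab}}(X)$ homeomorphically with the closed submanifold ${\rm{Stab}}(X_L)^G$ (Corollary \ref{closed-submf}), and combined with $p^*({\rm{Stab}}(X))={\rm{Stab}}(X_L)$ this yields ${\rm{Stab}}(X_L)^G={\rm{Stab}}(X_L)$ and the desired homeomorphism ${\rm{Stab}}(X)\cong {\rm{Stab}}(X_L)$.

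The bulk of the work is carried by the preceding Proposition, whose content is precisely that $p^*(\Sigma)$ is simultaneously open and closed in $\Sigma_L$; the only genuinely new ingredients here are the connectedness of ${\rm{Stab}}(X_L)$ (to upgrade $\Sigma_L$ to the whole manifold) and the injectivity of $p^*$ (to rule out extra components of ${\rm{Stab}}(X)$). I do not anticipate a real obstacle: the single point to state carefully is that the dimension-preservation result lets me match $\Sigma$ to a component $\Sigma_L$ of equal dimension \emph{before} applying the Proposition. Finally, the non-numerical version is obtained by repeating the argument verbatim with ${\rm{Stab}}_*$ in place of ${\rm{Stab}}$, since every result quoted above—the injectivity of $p^*$, the closed-submanifold statement, and the preceding Proposition—has been recorded together with its ${\rm{Stab}}_*$-analogue.
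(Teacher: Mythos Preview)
Your proposal is correct and follows exactly the route the paper intends: the corollary is stated with a bare \qedsymbol\ because it is meant to be an immediate consequence of the preceding Proposition together with the injectivity of $p^*$ and the closed-submanifold statement, and you have simply written out those implicit steps. The one detail you make explicit that the paper suppresses is the argument ruling out further components of ${\rm{Stab}}(X)$ via injectivity of $p^*$; this is indeed needed and your reasoning there is sound.
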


We will now apply the above results to the case of K3 surfaces. Recall that in \cite{Bri2} Bridgeland proved that the numerical stability manifold of a complex projective K3 surface $X$ contains a connected component which is a covering space of a certain open subset in the vector space $N(X)\otimes \IC$. Furthermore, he described the group of deck transformations of this covering via a subgroup of the group of autoequivalences $\Aut(\Db(X))$.\smallskip

Assume now that the K3 surface $S_\IC$ is defined over the real numbers and that $S$ possesses an $\IR$-rational point. Further assume that all line bundles on $S_\IC$ are also defined over $\IR$. This is, for example, the case if $S_\IC$ is generic, i.e.\ of Picard rank 1, since in this case the Galois group has to act as the identity on $\IZ=\Pic(S_\IC)$. These conditions ensure that (5.1) (or rather its numerical version) is an isomorphism. \smallskip

There cannot exist any numerical stability conditions on the standard heart ${\rm{Coh}}(S_\IC)$. Bridgeland therefore uses the theory of tilting to produce new hearts on which stability conditions can indeed be constructed. The method works as follows. One takes $\IR$-divisors $\beta$ and $\omega$ so that $\omega$ is in the ample cone. Any torsion free sheaf has a HN-filtration with respect to the slope-stability given by $\omega$ and truncating the filtrations at $\beta \cdot \omega$ gives a torsion pair. It follows from this construction that the torsion pair does not depend on $\beta$, but only on $\omega$ and the product $\beta\cdot \omega$. If $\omega$ is an ample line bundle (and therefore by our assumption defined over $\IR$), the HN-filtration of a sheaf pulled back from $S$, is defined over $\IR$, cf. \cite[Thm.\ 1.3.7]{HL}. The torsion pair therefore descends and so does the heart obtained by tilting with respect to it. Corollary \ref{heart-descent} then implies that the stability manifold ${\rm{Stab}}(S)$ is non-empty. Ignoring other possible components and using the corollary above we thus see that the distinguished component described by Bridgeland is defined over $\IR$. We thus proved the following

\begin{prop}
Let $S$ be a K3 surface over $\IR$ and denote by $S_\IC$ the complex K3 surface obtained by base change. Furthermore, assume that $S$ has an $\IR$-rational point and that $\Pic(S)=\Pic(S_\IC)$. Then there exists a connected component $\rm{Stab}^\dagger(S) \subset \rm{Stab}(S)$ such that there is a homeomorphism between $\rm{Stab}^\dagger(S)$ and Bridgeland's distinguished component $\rm{Stab}^\dagger(S_\IC) \subset \rm{Stab}(S_\IC)$. \qqed
\end{prop}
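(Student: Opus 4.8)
The plan is to assemble the final proposition from the machinery already built up in the last section, specialised to the extension $\IC/\IR$ and to the distinguished component $\rm{Stab}^\dagger(S_\IC)$ constructed by Bridgeland in \cite{Bri2}. The first step is to verify that the hypotheses of Corollary \ref{stab-iso} are met. The standing assumptions that $S$ has an $\IR$-rational point and that $\Pic(S)=\Pic(S_\IC)$ are precisely what is needed to ensure that the numerical version of the map (5.1), namely $N(S)\otimes\IC \rightarrow N(S_\IC)\otimes\IC$, is an isomorphism: for a K3 surface $N$ is built out of $\IZ$ in degrees $0$ and $4$ together with the Picard lattice in degree $2$, so the $\IR$-point handles the rank and Euler-characteristic directions while the Picard hypothesis handles the middle, and the Galois action of $G=\Aut(\IC/\IR)=\IZ/2$ then acts trivially on $N(S_\IC)$.

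The second step is to produce a nonempty piece of $\rm{Stab}(S)$, since Corollary \ref{stab-iso} requires $\rm{Stab}(S)\neq\emptyset$. Here I would invoke the tilting construction recalled just above the statement, following Example \ref{tilting}. One chooses an ample class $\omega$ (defined over $\IR$ by the Picard hypothesis) and a real parameter $\beta\cdot\omega$; the slope-stability HN-filtrations of torsion-free sheaves produce a torsion pair in $\Coh(S_\IC)$, and by the base-change stability of HN-filtrations \cite[Thm.\ 1.3.7]{HL} this torsion pair is Galois-invariant. Hence by Example \ref{tilting} the tilted heart descends to $\Db(S)$, and Corollary \ref{heart-descent} together with Proposition \ref{induced-sc-abelian} furnishes an honest stability condition on $\Db(S)$, proving $\rm{Stab}(S)\neq\emptyset$.

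The third step is to identify where this descended stability condition lands. The point is that $p^*$ of it is exactly Bridgeland's stability condition on the tilted heart, so $p^*(\rm{Stab}(S))$ meets $\rm{Stab}^\dagger(S_\IC)$. Now I would apply the dimension-preservation corollary and the preceding proposition: since (5.1) is an isomorphism, $p^*$ carries the component $\Sigma$ of $\rm{Stab}(S)$ containing our stability condition to a component $\Sigma_L$ of the same dimension, and in fact $p^*(\Sigma)=\Sigma_L$ once $\Sigma_L$ is known to be connected and $p^*(\Sigma)$ open and closed in it. Taking $\Sigma_L=\rm{Stab}^\dagger(S_\IC)$, which is connected by Bridgeland's description as a covering space of a connected open subset of $N(S_\IC)\otimes\IC$, I set $\rm{Stab}^\dagger(S):=\Sigma$ and obtain the homeomorphism $p^*\colon \rm{Stab}^\dagger(S)\isomor \rm{Stab}^\dagger(S_\IC)$.

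The main obstacle I anticipate is the second step: controlling $\rm{Stab}(S)$ well enough to guarantee that the descended stability condition pulls back into Bridgeland's \emph{distinguished} component rather than some other component of $\rm{Stab}(S_\IC)$. The openness/closedness argument and Corollary \ref{stab-iso} only give an abstract homeomorphism $\rm{Stab}(S)\cong\rm{Stab}(S_\IC)$ restricted to matching components; pinning the image down to $\rm{Stab}^\dagger(S_\IC)$ requires knowing that the explicit tilted heart used is one of those appearing in Bridgeland's parametrisation of $\rm{Stab}^\dagger(S_\IC)$. This is exactly why the construction is routed through the specific $(\beta,\omega)$-tilt of $\Coh(S_\IC)$, for which membership in $\rm{Stab}^\dagger(S_\IC)$ is guaranteed by \cite{Bri2}, so the care lies in matching the descended object to Bridgeland's explicit stability conditions rather than in any further analytic estimate.
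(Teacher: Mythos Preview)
Your proposal is correct and follows essentially the same route as the paper's argument (which is given in the discussion preceding the proposition rather than in a separate proof block). Both verify the numerical isomorphism from the $\IR$-point and Picard hypotheses, descend Bridgeland's tilted heart via the Galois-invariance of the HN torsion pair to show $\Stab(S)\neq\emptyset$, and then invoke the component-wise equality $p^*(\Sigma)=\Sigma_L$ from the proposition preceding Corollary~\ref{stab-iso}; if anything, you are slightly more explicit than the paper about why the image lands in $\Stab^\dagger(S_\IC)$ rather than some other component, which the paper folds into the phrase ``ignoring other possible components.''
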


\end{document}